\documentclass[11pt]{amsart}

\usepackage[margin=1.15in]{geometry}
\usepackage{amssymb,mathtools}
\newcommand{\setZ}{\mathbb Z}
\providecommand\given{}
\newcommand\SetSymbol[1][]{%
  \nonscript\:#1\vert\allowbreak\nonscript\:\mathopen{}%
}
\DeclarePairedDelimiterX\Set[1]\{\}{%
  \renewcommand\given{\SetSymbol[\delimsize]}#1%
}
\DeclareMathOperator{\rk}{rk}
\DeclareMathOperator{\Mat}{M}
\DeclareMathOperator{\diag}{diag}
\DeclareMathOperator{\id}{id}

\DeclareMathOperator{\GL}{GL}
\DeclareMathOperator{\Rea}{Re}
\newcommand{\E}{\operatorname{E}}
\usepackage{enumitem}
\usepackage{needspace}
\usepackage{xcolor}
\usepackage{tikz}
\usetikzlibrary{arrows.meta,calc}
\tikzset{
  bratteli edge/.style={-{Stealth[length=1.6mm,width=1.1mm]},
  line width=.45pt,shorten >=1.7pt,shorten <=1.7pt}
}

\newtheorem{theorem}{Theorem}[section]
\newtheorem{proposition}[theorem]{Proposition}
\newtheorem{lemma}[theorem]{Lemma}
\newtheorem{corollary}[theorem]{Corollary}
\theoremstyle{definition}
\newtheorem{definition}[theorem]{Definition}

\theoremstyle{remark}
\newtheorem{remark}[theorem]{Remark}

\usepackage[colorlinks=true, hypertexnames=false,
  linkcolor=blue!55!black, citecolor=blue!55!black,
  urlcolor=blue!55!black]{hyperref}
\usepackage[nameinlink, noabbrev, capitalise]{cleveref}
\theoremstyle{plain}
\newtheorem*{repeatedmaintheorem}{Theorem~\ref{thm:main}}
\theoremstyle{remark}
\newcommand{\MainTheoremStatement}{%
The group \(\GL(Q)\) is exotic, and
\[
 \overline{\E_\mu(R)}=\overline{\GL_\mu(R)}=\GL(Q).
\]
}
\AddToHook{env/theorem/begin}{\crefalias{section}{theorem}}
\AddToHook{env/proposition/begin}{\crefalias{section}{proposition}}
\AddToHook{env/lemma/begin}{\crefalias{section}{lemma}}
\AddToHook{env/corollary/begin}{\crefalias{section}{corollary}}
\AddToHook{env/definition/begin}{\crefalias{section}{definition}}
\AddToHook{env/example/begin}{\crefalias{section}{example}}
\AddToHook{env/remark/begin}{\crefalias{section}{remark}}

\title{Exotic Unit Groups of Sylvester Rank Completions}

\author{Baojie Jiang}
\address{\hskip-\parindent
  Baojie Jiang,
  School of Mathematical Sciences, Chongqing Normal University, University Town, Shapingba District, Chongqing, 401331, China.}
\email{jiangbaojie@gmail.com}

\date{\today}
\subjclass[2010]{Primary 22A25; Secondary 16E50}
\keywords{Sylvester matrix rank function, rank-metric completion,
exotic group, unitary representation, escape function,
Bourbaki boundedness, extreme amenability}

\hypersetup{
  pdftitle={Exotic Unit Groups of Sylvester Rank Completions},
  pdfauthor={Baojie Jiang},
  pdfsubject={Unitary representations and unit groups of Sylvester rank completions},
  pdfkeywords={Sylvester matrix rank function, rank-metric completion,
    exotic group, unitary representation, escape function,
Bourbaki boundedness, extreme amenability}
}

\begin{document}

\allowdisplaybreaks
\raggedbottom

\begin{abstract}
Let \(R\) be any unital ring equipped with a Sylvester matrix rank
function, and let \(Q\) be the rank completion of the
matrix system defined by block-diagonal embeddings along a factor sequence.
We prove that both \(\GL(Q)\) and \((Q,+)\) are exotic:
every strongly continuous unitary representation is trivial.
We also prove that \(\GL(Q)\) is Bourbaki bounded and admits no
nonzero escape function.
For every \(\varepsilon>0\), let \(B_\varepsilon\) be the open ball
centered at the identity. The least positive integer \(N\) such that
\(\GL(Q)=B_\varepsilon^N\) is
\(\lfloor\varepsilon^{-1}\rfloor+1\).
Neither regularity nor irreducibility is required.
Our proof of exoticness uses a uniform Kazhdan estimate for additive integer
matrix groups together with rank-one matrix perturbations, rather than the
character-theoretic and continuous ring methods used previously.
The geometric conclusions follow from factorizations using
diagonal corner subgroups.
\end{abstract}

\maketitle
\enlargethispage{2pt}

\section{Introduction}

Let \(D\) be a division ring and let \(\mu=(n_k)_{k\geq1}\)
be a \emph{factor sequence}, that is, an unbounded sequence of
positive integers such that \(n_k\mid n_{k+1}\).
We consider the block-diagonal embeddings
\[
 \phi_k:\Mat_{n_k}(D)\longrightarrow\Mat_{n_{k+1}}(D),
 \qquad
 \phi_k(X)=\diag(\underbrace{X,\ldots,X}_{n_{k+1}/n_k\text{ copies}}).
\]
These maps preserve the normalized rank \(\frac{1}{n_k}\rk(X)\), where
\(\rk\) is the usual matrix rank over \(D\).
Hence the metrics
\(d(X,Y)=\frac{1}{n_k}\rk(X-Y)\) induce a metric on the direct limit
\(D_\mu\) of this system.
Its completion, denoted by \(\mathcal M_D\), is a continuous factor.

This construction goes back to von Neumann's theory of continuous
geometry \cite{vonNeumann1936Geometry}.
Independence of the chosen factor sequence was established by
von Neumann and developed by Halperin \cite{Halperin1968};
further uniqueness and approximation results were obtained by
Ara and Claramunt \cite{AraClaramunt2018}.
The rank topology on the unit groups of these completions naturally leads to
questions in unitary representation theory and topological dynamics.

A topological group is called \emph{exotic} if every strongly
continuous unitary representation on a complex Hilbert space
is trivial.
Such groups were first constructed by Herer and
Christensen \cite{HererChristensen} using pathological submeasures.
Carderi and Thom \cite{CarderiThom} exhibited an exotic matrix group:
\(\GL(\mathcal M_{\mathbb F_q})\) is exotic and extremely
amenable. Their proofs use character estimates for finite special
linear groups and concentration of measure, respectively.

Recently, Schneider and Thom \cite[Theorem~1.1]{SchneiderThom}
proved exoticness of the unit group of every non-discrete
irreducible continuous ring.
This includes the rings of operators
affiliated with arbitrary \(\mathrm{II}_1\) factors.
Their argument uses character estimates in positive characteristic,
charmenable arithmetic groups in characteristic zero, and character
rigidity together with the structure of continuous rings.
Schneider \cite{SchneiderGeometry}
also established Bourbaki boundedness and absence of nonzero
escape functions for these unit groups, using small-subgroup
factorizations derived from Schneider's continuous triangularization theorem
\cite[Theorem~9.11]{SchneiderTriangularization}.
Related structural methods yield width bounds and automatic continuity in the
work of Bernard and Schneider \cite{BernardSchneider}.

Sylvester matrix ranks allow the same block-diagonal construction
over more general rings.
The completion can then be
non-regular and non-simple, as happens for the normalized length
rank on \(\setZ/4\setZ\)
\cite[Example~5.6 and Remark~5.7]{JiangBratteli}.
Such examples lie outside the scope of the continuous ring results above.
We show that the rank axioms and block-diagonal embeddings alone suffice to
establish exoticness, Bourbaki boundedness,
and absence of nonzero escape functions.
Conversely, the continuous ring theorems apply to rings that need not
arise from these matrix systems.

We now let \(R\) be a unital ring equipped with a Sylvester matrix rank function
\(\mathrm{rk}\), and call \((R,\mathrm{rk})\) a \emph{rank ring}.
Using the same block-diagonal construction along the factor sequence \(\mu\), let
\(R_\mu=\varinjlim_k\Mat_{n_k}(R)\) and write
\(Q=\overline{R_\mu}\) for its completion in the normalized
rank metric, after identifying elements at distance zero.
The construction is detailed in \Cref{sec:factor-sequences};
neither regularity of \(R\) nor faithfulness of \(\rk\) is assumed.

Write \(\GL(Q)\) for the unit group of \(Q\), and let \(\iota_k:\Mat_{n_k}(R)\to Q\)
be the canonical maps.
For \(n\geq 1\), write \(\GL_n(R)\) for the group of invertible \(n\times n\)
matrices over \(R\), and \(\E_n(R)\) for its elementary subgroup.
Let \(\E_\mu(R)\) and \(\GL_\mu(R)\) be the unions of the
images in \(Q\) of \(\E_{n_k}(R)\) and \(\GL_{n_k}(R)\),
respectively.
Subgroups of \(\GL(Q)\) carry the inherited rank topology, and closures are taken in \(Q\).

The key point is that elementary matrices already determine every continuous
unitary representation of \(\GL(Q)\): they generate a dense subgroup on which
every such representation is trivial.
More precisely, we prove the following.

%

\begin{repeatedmaintheorem}
\MainTheoremStatement
\end{repeatedmaintheorem}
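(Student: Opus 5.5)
The proof has two parts. First I show the density statement \(\overline{\E_\mu(R)}=\GL(Q)\). Then I show that every strongly continuous unitary representation \(\pi\) of \(\GL(Q)\) is trivial on \(\E_\mu(R)\). Since \(\pi\) is continuous, triviality on a dense subgroup forces \(\pi\) to be trivial everywhere. Since \(\E_\mu(R)\subseteq\GL_\mu(R)\subseteq\GL(Q)\), density of \(\E_\mu(R)\) also gives \(\overline{\GL_\mu(R)}=\GL(Q)\).

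\textbf{Density.} Take \(g\in\GL(Q)\) and \(\varepsilon>0\).
\begin{enumerate}[label=(\roman*)]
\item Choose \(A\in\Mat_{n_k}(R)\) with \(d(\iota_k(A),g)<\varepsilon\).
\item Choose \(B\) approximating \(g^{-1}\) in the same way, enlarging \(k\) if needed. Then \(AB-1\) and \(BA-1\) have normalized rank \(O(\varepsilon)\), using that multiplication in \(Q\) is \(1\)-Lipschitz in each variable for the rank metric.
\item The step I expect to be the main obstacle is upgrading such a "rank-almost-invertible" \(A\) to a genuinely elementary matrix nearby, without regularity of \(R\). Here I would use the Sylvester axioms directly. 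Pass to \(\phi_k(A)\) at a larger level, so that \(A\) sits in a block \(\diag(A,A,\dots)\). Then the standard Whitehead-type identity \(\begin{psmallmatrix}A&0\\0&B\end{psmallmatrix}\in \E_{2n}(R)\cdot\begin{psmallmatrix}AB&0\\0&1\end{psmallmatrix}\)-type factorizations, with \(AB\) close to \(1\) in rank, show that \(\diag(A,B)\) lies within \(O(\varepsilon)\) of an elementary matrix.
\item Finally I must handle the mismatch that block diagonals repeat \(A\) rather than alternate \(A,B\). I would deal with this by a rank-\(O(\varepsilon)\) correction: the difference \(\diag(A,B)-\diag(A,A)\) is small only after a conjugation argument, so alternatively I approximate \(g\) by \(\diag(A,A^{\ast})\) where the second blocks form a negligible fraction of the size.
\end{enumerate}
Concretely, at level \(n_{m}\gg n_k\), I use the matrix \(\diag(A,\dots,A,B',1)\) in which only a \(O(n_k/n_m)\) fraction of rows is altered. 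A rank-one perturbation argument, in which each elementary move changes rank by at most one row, shows this lies \(O(\varepsilon)+O(n_k/n_m)\)-close to \(\E_{n_m}(R)\).

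\textbf{Triviality on \(\E_\mu(R)\).} It suffices to show that \(\pi\) fixes every elementary matrix \(e_{ij}(r)\), embedded at level \(n_k\).
\begin{enumerate}[label=(\roman*)]
\item Its image in \(Q\) equals the image at level \(n_m\) of \(\prod_{s}e_{i_s j_s}(r)\) over \(n_m/n_k\) disjoint block positions.
\item Consider the additive group \(\Mat_{p\times q}(\setZ)\) acting via the upper-triangular unipotents \(\begin{psmallmatrix}1&rX\\0&1\end{psmallmatrix}\). A single rank-one integer matrix \(X=E_{ab}\) moves the point by distance at most \(1/n_m\), which tends to \(0\).
\item Apply the uniform Kazhdan estimate for the additive integer matrix groups, assumed from the body of the paper. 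It says that a unit vector almost invariant under the rank-one generators, up to \(\delta\) uniformly in dimension, is close to a vector invariant under the whole group.
\item By strong continuity, for large \(m\) every vector is \(\delta\)-invariant under all elements of distance \(\le 1/n_m\), so every vector is nearly invariant under the whole unipotent subgroup.
\item That subgroup contains our element, which lies in the image of \(\Mat(\setZ)\) under \(X\mapsto rX\) with the diagonal pattern. Letting \(\delta\to0\) shows \(\pi(e_{ij}(r))v=v\) for all \(v\).
\end{enumerate}
This gives triviality on \(\E_\mu(R)\), hence exoticness by density. Besides the density step, the delicate point is making the Kazhdan constant independent of the dimension.
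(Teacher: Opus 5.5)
Your overall architecture is the paper's: triviality on \(\E_\mu(R)\) from a dimension-free Kazhdan estimate for \((\Mat_r(\setZ),+)\), then density of \(\E_\mu(R)\), then continuity. The representation-theoretic half is essentially the proof of \Cref{thm:roots}, with one caution. The Kazhdan set used there is the infinite set of all \(vw^{\mathsf T}\) with \(v\in\{0,1\}^r\) and \(w\in\setZ^r\). So the displacement bound \(1/n_m\) must be checked for all of these, not only for the matrix units \(E_{ab}\) of your step~(ii). It holds because \((vw^{\mathsf T})\otimes(ae_{ij})\) is a column times a row, hence has rank at most \(1\) however large the entries of \(w\) are. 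The matrix units alone form no Kazhdan set, since \(\setZ^{r^2}\) is infinite and amenable.

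The genuine gap is the amplification step of the density argument, which you flag as the main obstacle but do not resolve. The Whitehead-type product (three block unipotents and a block swap) gives \(W\in\E_{2n}(R)\) with
\[
 W-\diag(A,B)=\diag(1_n-AB,\,1_n-BA)\begin{pmatrix}A&-1_n\\1_n&0\end{pmatrix},
\]
so \(\rk(W-\diag(A,B))\leq\rk(1_n-AB)+\rk(1_n-BA)\). The problem is to pass from this to an elementary matrix close to \(1_r\otimes A\).

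The first suggestion in step~(iv) does not work. The difference \(\diag(A,B)-\diag(A,A)=\diag(0,B-A)\) is not small unless \(g\approx g^{-1}\), and you give no conjugation argument that would fix this. The alternative \(\diag(A,\ldots,A,B',1)\) has the right shape and the right target estimate, but you never say what \(B'\) is. Moreover, the ``rank-one perturbation argument'' does not prove the estimate. Counting moves that each change rank by at most one bounds the error only by the number of moves. Any word in elementary generators producing a matrix near \(1_r\otimes A\) has length at least about \(n_m\,d(g,1_Q)\), not \(O(\varepsilon n_m)\). The error has to be controlled algebraically by \(\rk(1_n-AB)\), as in the display above.

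The missing idea, which is \Cref{lem:correction}, is to pair every copy of \(A\) with one common reservoir block. Define:
\begin{itemize}
\item \(W_h\): the matrix \(W\) placed on block positions \(h\) and \(r\);
\item \(D_h\): the block-diagonal matrix with \(A\) at position \(h\), \(B\) at position \(r\), and identities elsewhere;
\item \(U=W_1\cdots W_{r-1}\in\E_{nr}(R)\).
\end{itemize}
The telescoping identity
\[
 U-D_1\cdots D_{r-1}=\sum_{h=1}^{r-1}W_1\cdots W_{h-1}(W_h-D_h)D_{h+1}\cdots D_{r-1}
\]
bounds the unnormalized error by \((r-1)\bigl(\rk(1_n-AB)+\rk(1_n-BA)\bigr)\). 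After dividing by \(nr\) this is \(O(\varepsilon)\), uniformly in \(r\). Meanwhile \(D_1\cdots D_{r-1}=\diag(A,\ldots,A,B^{r-1})\) differs from \(1_r\otimes A\) only in the reservoir block, at normalized cost at most \(1/r\). So \(B'=B^{r-1}\): the defect is dumped into a single block, while the rank error grows only linearly in the number of blocks, which the normalization absorbs.

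Separately, the equality of closures taken in \(Q\) also requires \(\GL(Q)\) to be closed in \(Q\); density gives only \(\GL(Q)\subseteq\overline{\E_\mu(R)}\). This is easy because inversion is an isometry. Inverses of a convergent sequence of units are Cauchy, and completeness supplies a two-sided inverse of the limit. But it must be stated.
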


The representation-theoretic part of the proof rests on a Kazhdan estimate for
additive integer matrix groups whose constant is independent of the matrix size.
After amplifying an elementary matrix \(u=1_{n_k}+ae_{ij}\) to level \(\ell>k\),
we realize its action through a representation of the additive group
\(M_r(\mathbb Z)\), where \(r=n_\ell/n_k\). The matrices \(vw^T\) occurring in
this estimate give perturbations of the identity of normalized rank at most
\(1/n_\ell\). The uniform Kazhdan estimate, together with strong continuity,
then forces every unitary representation of \(\GL(Q)\) to be trivial on
\(\E_\mu(R)\). The estimate is obtained from the codistance criterion of Ershov
and Jaikin-Zapirain \cite[Lemma~3.1(a)]{EJ}; see
\Cref{cor:integer-matrix-displacement,thm:roots}.

To pass from elementary matrices to the full group \(\mathrm{GL}(Q)\), we prove the density of
\(\E_\mu(R)\) in \(\GL(Q)\) using a quantitative version of the Whitehead block
identity \cite[Example~III.1.2.1]{WeibelKbook}. The same integer-matrix estimate
also yields exoticness of the additive group \((Q,+)\); see \Cref{thm:additive}.

A different consequence of the block-diagonal structure is a sharp factorization
into small rank balls. For \(\varepsilon>0\), let \(B_\varepsilon\) be the open
ball centered at \(1_Q\), and let \(B_\varepsilon^N\) denote the set of products
of \(N\) elements of this ball.
We prove that
\[
 \min\Set*{N\in\mathbb N_{\geq1}\given\GL(Q)=B_\varepsilon^N}
 =\lfloor\varepsilon^{-1}\rfloor+1.
\]
In particular, \(\GL(Q)\) is \emph{Bourbaki bounded}: for each identity
neighborhood \(U\), finitely many left translates of some finite power of \(U\)
cover the group. We also prove that every escape function on \(\GL(Q)\) is zero,
and that every continuous homomorphism from \(\GL(Q)\) to a Hausdorff group with
the escape property is trivial; see
\Cref{cor:bourbaki,cor:no-escape,cor:escape-homomorphisms}. Here an \emph{escape
  function} is a length function that is uniformly small on elements whose first
sufficiently many powers remain in a fixed identity neighborhood. A Hausdorff
group has the \emph{escape property} if every identity neighborhood contains
\(f^{-1}([0,1))\) for some escape function \(f\). Precise definitions are given
in \Cref{sec:small-subgroups}.

Combining exoticness with amenability, we obtain extreme amenability of
\((Q,+)\) and, when \(R\) is locally finite, of \(\GL(Q)\); see
\Cref{thm:extreme-amenability,thm:unit-extreme-amenability}. Our results do not
settle the amenability question for \(\GL(\mathcal M_{\mathbb Q})\) raised in
\cite[Remark~5.12(1)]{SchneiderAmenability} and
\cite[Problem~6.1]{SchneiderThom}. The rank-preserving uniqueness theorem of
\cite[Theorem~1.1]{JiangBratteli} also transfers these results to weighted
Bratteli completions under the hypotheses recorded in \Cref{sec:bratteli}.

The paper is organized as follows. \Cref{sec:setting} gives the preliminaries.
\Cref{sec:integer-estimates} proves the integer-matrix estimate, and
\Cref{sec:density} applies it to elementary matrices and establishes density,
yielding unit-group exoticness. \Cref{sec:additive} proves additive-group
exoticness, while \Cref{sec:small-subgroups} establishes the geometric
conclusions. Finally, \Cref{sec:applications} contains the fixed-point results,
non-regular examples, and the Bratteli application.

\section*{Acknowledgments}

\paragraph{\textbf{AI Use Statement.}}
This work was completed with the assistance of ChatGPT
(GPT-5.6 Sol and GPT-6 Astra) in developing and checking
the mathematical arguments.
The author takes full responsibility for all mathematical
statements, proofs, references, and editorial decisions
in the final version.

\section{Preliminaries}\label{sec:setting}

For \(t\in\mathbb R\), let \(\lfloor t\rfloor\) denote the greatest
integer not exceeding \(t\), and let \(\lceil t\rceil\) denote the
least integer not less than \(t\).

The identity of a unital ring \(R\) is denoted by \(1_R\).
The ring is \emph{von Neumann regular} if every \(a\in R\) has
an element \(b\in R\) with \(a=aba\).
An element \(e\in R\) is an \emph{idempotent} if \(e^2=e\).
The \emph{corner ring} \(eRe=\Set*{exe\given x\in R}\)
has identity \(e\).

For positive integers \(m,n\), write \(\Mat_{m\times n}(R)\) for the set of
\(m\times n\) matrices over \(R\), and put \(\Mat_n(R)=\Mat_{n\times n}(R)\).
Its identity matrix is \(1_n\), and
\(e_{ij}\) denotes the matrix of the appropriate size whose \((i,j)\)-entry is
\(1_R\) and whose other entries are zero.
We write \(0_n\) for the zero \(n\)-by-\(n\) matrix and \(e_i\)
for the \(i\)-th standard column; the coefficient ring and size
are determined by the context.
For any \(A=(a_{ij})\in \Mat_{m\times n}(R)\), its transpose is
\(A^{\mathsf T}=(a_{ji})\).
For a ring homomorphism \(\psi:S\to T\) and a rectangular matrix
\(A=(a_{ij})\) over \(S\), we write \(\psi(A)=(\psi(a_{ij}))\)
for the matrix obtained by applying \(\psi\) to each entry.
For an abelian group \(V\), the notation \(V^n\) denotes its
\(n\)-fold direct product; when rows and columns must be distinguished,
we write \(V^{1\times n}\) and \(V^{n\times1}\), respectively.

For \(A=(a_{ij})\in\Mat_{r\times s}(R)\) and
\(B\in\Mat_{m\times n}(R)\), we use the Kronecker product
\[
 A\otimes B=(a_{ij}B)_{i,j}\in\Mat_{rm\times sn}(R),
\]
with lexicographically ordered row and column index pairs.
In particular,
\[
 1_r\otimes X=\diag(\underbrace{X,\ldots,X}_{r\text{ copies}}).
\]
Integer coefficients act through the canonical homomorphism
\(\setZ\to R\), whose image lies in the centre of \(R\).

\Needspace{12\baselineskip}
\subsection{Sylvester matrix rank functions}

We use the standard axioms for Sylvester matrix rank functions;
see Malcolmson \cite{Malcolmson1980} for their relation to
homomorphisms to division rings. For further background, see
\cite{JaikinZapirain2019,JiangLi2021,Li2021} and
\cite[Part~I, Chapter~7]{Schofield1985}.

\begin{definition}
Let \(R\) be a unital ring. A \emph{Sylvester matrix rank function}
on \(R\) is an \(\mathbb R_{\geq0}\)-valued function \(\rk\) on
all finite rectangular matrices over \(R\), satisfying:
\begin{enumerate}[label=(SM\arabic*)]
\item \(\rk(0)=0\) and \(\rk(1_R)=1\);
\item \(\rk(AB)\leq\min\Set*{\rk(A),\rk(B)}\) for compatible matrices;
\item \(\rk\diag(A,B)=\rk(A)+\rk(B)\);
\item \(\rk\begin{pmatrix}A&C\\0&B\end{pmatrix}\geq\rk(A)+\rk(B)\)
for matrices of the indicated sizes.
\end{enumerate}
\end{definition}

A \emph{rank ring} \((R,\rk)\) is a unital ring with a specified
Sylvester matrix rank function. The rank is \emph{faithful} if
\(\rk(A)>0\) for every nonzero matrix \(A\).

The rank axioms imply subadditivity,
invariance under invertible row and column operations, and
\(\rk(A)\leq\min\Set*{m,n}\) for \(A\in\Mat_{m\times n}(R)\).
Indeed, \(\rk(1_m)=m\); writing \(A=1_mA1_n\) gives the size bound.
Applying the product inequality first to \(SAT\) and then to
\(A=S^{-1}(SAT)T^{-1}\) proves invariance when \(S,T\) are
invertible. For \(A,B\in\Mat_{m\times n}(R)\), the factorization
\[
 A+B=\begin{pmatrix}1_m&1_m\end{pmatrix}
       \diag(A,B)\begin{pmatrix}1_n\\1_n\end{pmatrix}
\]
proves subadditivity.
In particular
\( |\rk(A)-\rk(B)|\leq\rk(A-B)\) for equal-size matrices.
A product of a column and a row has rank at most \(1\),
regardless of the sizes or values of its entries.

The \emph{rank pseudometric} on \(R\) is
\[
 d_{\rk}(x,y):=\rk(x-y)\qquad(x,y\in R).
\]
Symmetry and the triangle inequality follow from invariance under
multiplication by \(-1_R\) and subadditivity, respectively.
If \(\rk\) is faithful, this is a metric, called the \emph{rank metric}.
In either case, the induced topology is the \emph{rank topology}.

For \(X\in\Mat_{r\times s}(\Mat_p(R))\), under the identification
\(\Mat_{r\times s}(\Mat_p(R))\cong\Mat_{rp\times sp}(R)\), set
\[
 \rk_p(X):=\frac1p\rk(X).
\]
This defines a Sylvester matrix rank function on \(\Mat_p(R)\),
whose rank pseudometric is \((X,Y)\mapsto\rk_p(X-Y)\).

\subsection{Factor sequences and rank completions}\label{sec:factor-sequences}

Fix a rank ring \((R,\rk)\). A \emph{factor sequence} is an
unbounded sequence \(\mu=(n_k)_{k\geq1}\) of positive integers
such that \(n_k\mid n_{k+1}\) for every \(k\geq1\).
Fix such a sequence and put \(n_0=1\). The block-diagonal embeddings
\[
 \phi_{\ell,k}:\Mat_{n_k}(R)\longrightarrow\Mat_{n_\ell}(R),\qquad
 \phi_{\ell,k}(X)=1_{n_\ell/n_k}\otimes X
 \quad(\ell\geq k\geq0)
\]
form a direct system. For \(k\geq1\), the consecutive map
\(\phi_{k+1,k}\) is the embedding \(\phi_k\) used in the
introduction. These maps preserve the normalized ranks:
\[
 \rk_{n_\ell}(\phi_{\ell,k}(X))=\rk_{n_k}(X).
\]
Thus the algebraic direct limit
\[
 R_\mu:=\varinjlim_{k\geq0}(\Mat_{n_k}(R),\phi_{k+1,k})
\]
carries the induced Sylvester matrix rank function \(\rk_\mu\).

The set \(\ker(\rk_\mu)=\Set*{x\in R_\mu\given\rk_\mu(x)=0}\)
is a two-sided ideal, and the rank descends to the quotient.
We write
\[
 Q=\overline{R_\mu}
\]
for the metric completion of \(R_\mu/\ker(\rk_\mu)\).
The rank axioms ensure that the ring operations and the Sylvester
matrix rank extend uniquely and continuously to \(Q\).
We denote the extended rank by \(\rk_\mu\) and write
\(d(x,y)=\rk_\mu(x-y)\) for the metric on \(Q\).

Let \(\iota_k:\Mat_{n_k}(R)\to Q\) be the canonical unital
homomorphism. Then
\begin{equation}\label{eq:canonical-compatibility}
 \iota_\ell\circ\phi_{\ell,k}=\iota_k
 \qquad(\ell\geq k),
\end{equation}
and, for \(X,Y\in\Mat_{n_k}(R)\),
\begin{equation}\label{eq:canonical-rank}
 \rk_\mu(\iota_k(X))=\rk_{n_k}(X),\qquad
 d(\iota_k(X),\iota_k(Y))=\rk_{n_k}(X-Y).
\end{equation}
The images \(\iota_k(\Mat_{n_k}(R))\) form an increasing union
dense in \(Q\). Since the rank on \(R\) need not be faithful,
we distinguish a matrix \(X\) from its image \(\iota_k(X)\).

We equip the unit group \(\GL(Q)\) with the inherited rank
metric and denote its identity by \(1_Q\).
Multiplication by a unit preserves rank, and
\[
 d(u^{-1},v^{-1})=d(u,v)\qquad(u,v\in\GL(Q)).
\]
Thus \(\GL(Q)\) is a topological group with a bi-invariant metric.
Its closedness in \(Q\) is proved in \Cref{prop:density}.
For \(n\geq1\), write \(\GL_n(R)=\GL(\Mat_n(R))\).
For \(n\geq2\), let \(\E_n(R)\) be the subgroup generated by
\(1_n+ae_{ij}\), where \(a\in R\) and \(i\ne j\), and put
\(\E_1(R)=\Set{1_R}\). Define
\[
 \E_\mu(R):=\bigcup_{k\geq0}\iota_k\bigl(\E_{n_k}(R)\bigr),
 \qquad
 \GL_\mu(R):=\bigcup_{k\geq0}\iota_k\bigl(\GL_{n_k}(R)\bigr).
\]
The block-diagonal embeddings preserve invertibility and send each
elementary generator to a product of elementary matrices. These are therefore
increasing unions of subgroups, with
\[
 \E_\mu(R)\leq\GL_\mu(R)\leq\GL(Q).
\]

The additive group of \(Q\) and the three groups above are
non-discrete. For \(n_k\geq2\), the matrix \(e_{12}\) has rank
\(1\): it factors through a single coordinate, and extracting
its \((1,2)\) entry gives the reverse inequality. Hence
\[
 d(\iota_k(e_{12}),0)
 =d(\iota_k(1_{n_k}+e_{12}),1_Q)=\frac1{n_k}>0,
\]
and these distances tend to zero.
If \(R\) is countable, then \(Q\) is separable; completeness of
\(Q\) and closedness of \(\GL(Q)\) imply that both \((Q,+)\)
and \(\GL(Q)\) are Polish.

\subsection{Unitary representations}\label{sec:unitary-preliminaries}

All Hilbert spaces are complex, with inner products linear in the
first variable. For a Hilbert space \(\mathcal H\), write
\(B(\mathcal H)\) for its bounded linear
operators, \(U(\mathcal H)\) for its unitary
group, and \(\id_{\mathcal H}\) for its identity operator.
Put \(\mathbb T=\mathbb R/\setZ\), and write \(\mathrm{i}\) for the imaginary unit.

\begin{definition}
A \emph{representation} of a topological group \(G\) on
\(\mathcal H\) is a homomorphism \(\pi:G\to U(\mathcal H)\)
that is \emph{strongly continuous}: the map \(g\mapsto\pi(g)\xi\) is
continuous for each \(\xi\in\mathcal H\).
For a discrete group this continuity condition is automatic.
In particular,
\(\pi(gh)=\pi(g)\pi(h)\) and
\(\pi(g^{-1})=\pi(g)^{-1}=\pi(g)^*\).
The representation is \emph{trivial} if
\(\pi(g)=\id_{\mathcal H}\) for every \(g\in G\).
The group \(G\) is \emph{exotic} if every representation
of \(G\) is trivial.
\end{definition}

For a representation \(\sigma:G\to U(\mathcal H)\) and a subgroup
\(H\leq G\), write \(P_{H}\) for the orthogonal projection onto the closed
subspace of invariant vectors
\[
 \mathcal H^H
 :=\Set*{\xi\in\mathcal H\given \sigma(h)\xi=\xi\text{ for every }h\in H}.
\]
We write \(\operatorname{Rep}_0(G)\) for the class of representations
of \(G\) without nonzero invariant vectors.

We shall use the spectral theorem for representations of the discrete
group \(\setZ^d\). Identify \(\widehat{\setZ^d}\) with
\(\mathbb T^d\) through the pairing
\[
 \chi_n(\theta)=\langle n,\theta\rangle
 =\exp\left(2\pi\mathrm{i}\sum_{j=1}^d n_j\theta_j\right),
\]
where \(n=(n_j)_{j=1}^d\in\setZ^d\) and
\(\theta=(\theta_j)_{j=1}^d\in\mathbb T^d\); thus \(\theta\)
corresponds to the character \(n\mapsto\chi_n(\theta)\).
For a representation \(\sigma:\setZ^d\to U(\mathcal H)\), the spectral theorem
\cite[Theorem~D.3.1]{BekkaHarpeValette} gives a projection-valued
measure \(E\) on \(\mathbb T^d\). We use the convention
\[
 \sigma(n)=\int_{\mathbb T^d}\overline{\chi_n(\theta)}\,dE(\theta).
\]
For \(\xi\in\mathcal H\) and a Borel set \(B\subseteq\mathbb T^d\), put
\(\nu_\xi(B)=\langle E(B)\xi,\xi\rangle\). Then
\[
 \begin{aligned}
 \langle\sigma(n)\xi,\xi\rangle
 &=\int_{\mathbb T^d}\overline{\chi_n(\theta)}\,d\nu_\xi(\theta),\\
 \|\sigma(n)\xi\|^2
 &=\int_{\mathbb T^d}|\chi_n(\theta)|^2\,d\nu_\xi(\theta)
 =\nu_\xi(\mathbb T^d)=\|\xi\|^2.
 \end{aligned}
\]
In particular,
\[
 \|(\sigma(n)-\id_{\mathcal H})\xi\|^2
 =\int_{\mathbb T^d}|\chi_n(\theta)-1|^2\,d\nu_\xi(\theta),
\]
which identifies the fixed space of \(\sigma(n)\) with the range of
\(E(\Set*{\theta\in\mathbb T^d\given\chi_n(\theta)=1})\).

For \(H\leq\setZ^d\), its \emph{annihilator} is
\[
 \begin{aligned}
 H^\perp
 &=\Set*{\theta\in\mathbb T^d\given
                    \chi_n(\theta)=1\text{ for all }n\in H}\\
 &=\bigcap_{n\in H}\Set*{\theta\in\mathbb T^d\given\chi_n(\theta)=1}.
 \end{aligned}
\]
Since \(H\) is countable, intersecting these fixed spaces gives
\(P_H=E(H^\perp)\), and
\begin{equation}\label{eq:spectral-fixed-space}
 \|P_H\xi\|^2
 =\langle E(H^\perp)\xi,\xi\rangle
 =\nu_\xi(H^\perp).
\end{equation}
In particular, \(\nu_\xi(\{0\})=\|P_{\setZ^d}\xi\|^2\), so a
representation without nonzero invariant vectors has no spectral
mass at the origin.

\section{Kazhdan estimates for integer matrix groups}\label{sec:integer-estimates}

Throughout this section, all groups carry the discrete topology
and all representations are unitary.
For each \(r\geq1\), we prove that the matrices \(vw^{\mathsf T}\),
with \(v\in\{0,1\}^r\) and \(w\in\setZ^r\), form a Kazhdan set
in \((\Mat_r(\setZ),+)\) with Kazhdan constant at least \(1\).

Let \(G\) be a discrete group, let \(S\subseteq G\) be nonempty, and let \(\varepsilon>0\).
A nonzero vector \(\xi\) in a unitary representation \(\pi:G\to U(\mathcal H)\)
is \emph{\((S,\varepsilon)\)-invariant} if
\[
 \sup_{s\in S}\|\pi(s)\xi-\xi\|<\varepsilon\|\xi\|.
\]
The pair \((S,\varepsilon)\) is called a \emph{Kazhdan pair} if every
unitary representation admitting a \((S,\varepsilon)\)-invariant
vector has a nonzero \(G\)-invariant vector.
A subset \(S\) is a \emph{Kazhdan set} if \((S,\varepsilon)\) is a Kazhdan pair
for some \(\varepsilon>0\).
We refer to \cite[Section~1.1]{BekkaHarpeValette} for background on
Kazhdan sets and Property~\textup{(T)}.
For a nonempty subset \(S\subseteq G\), the \emph{Kazhdan constant} is
\[
 \kappa(G,S)=\inf_{\substack{\pi:G\to U(\mathcal H)\\
             \pi\in\operatorname{Rep}_0(G),\ 0\neq\xi\in\mathcal H}}
 \frac{\sup_{s\in S}\|\pi(s)\xi-\xi\|}{\|\xi\|}.
\]
We use the convention \(\inf\varnothing=+\infty\).
Thus \((S,\varepsilon)\) is a Kazhdan pair precisely when
\(\kappa(G,S)\geq\varepsilon\).
Kazhdan sets need not be finite. For examples in infinite discrete
abelian groups, which do not have Property~\textup{(T)}, see
\cite{BadeaGrivaux}.

A Kazhdan pair gives a bound on the action of the whole group
in terms of its action on the given subset.

\begin{lemma}\label{lem:kazhdan-separation}
Let \(G\) be a discrete group and let \((S,\varepsilon)\) be a Kazhdan pair.
Then, for every unitary representation
\(\pi:G\to U(\mathcal H)\) and any \(\xi\in\mathcal H\),
\begin{equation}\label{eq:kazhdan-separation}
 \sup_{g\in G}\|\pi(g)\xi-\xi\|
 \leq
 \frac{2}{\varepsilon}
 \sup_{s\in S}\|\pi(s)\xi-\xi\|.
\end{equation}
\end{lemma}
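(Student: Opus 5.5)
Write \(\delta=\sup_{s\in S}\|\pi(s)\xi-\xi\|\). The plan is the standard Kazhdan-pair argument: split \(\xi\) along the space of \(G\)-invariant vectors and apply the Kazhdan property to the orthogonal complement.

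First, the degenerate cases. If \(\xi=0\), both sides of \eqref{eq:kazhdan-separation} vanish. If \(\delta\) is infinite, there is nothing to prove; in fact \(\delta\leq 2\|\xi\|\) always, so \(\delta\) is finite. Assume from now on that \(\xi\neq0\).

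Let \(P=P_G\) be the orthogonal projection onto \(\mathcal H^G\), and write \(\xi=\xi_0+\xi_1\) with \(\xi_0=P\xi\) and \(\xi_1=(\id_{\mathcal H}-P)\xi\). The subspace \((\mathcal H^G)^\perp\) is \(G\)-invariant, because each \(\pi(g)\) is unitary and preserves \(\mathcal H^G\). Hence \(\pi\) restricts to a representation \(\pi_1\) on \((\mathcal H^G)^\perp\). This restriction has no nonzero invariant vectors, i.e.\ \(\pi_1\in\operatorname{Rep}_0(G)\).

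Since \(\xi_0\) is fixed by \(G\), we have \(\pi(g)\xi-\xi=\pi(g)\xi_1-\xi_1\) for every \(g\in G\). Therefore \(\sup_{s\in S}\|\pi(s)\xi_1-\xi_1\|=\delta\), and likewise the left-hand side of \eqref{eq:kazhdan-separation} equals \(\sup_{g\in G}\|\pi(g)\xi_1-\xi_1\|\).

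Now apply the Kazhdan pair property to \(\pi_1\). If \(\xi_1\neq0\), then \(\xi_1\) cannot be \((S,\varepsilon)\)-invariant, since otherwise \(\pi_1\) would have a nonzero invariant vector. Hence \(\delta\geq\varepsilon\|\xi_1\|\). The same inequality holds trivially when \(\xi_1=0\). Equivalently, we can use \(\kappa(G,S)\geq\varepsilon\) directly.

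Finally, for every \(g\in G\),
\[
\|\pi(g)\xi_1-\xi_1\|\leq 2\|\xi_1\|\leq \frac{2}{\varepsilon}\,\delta .
\]
Taking the supremum over \(g\) gives \eqref{eq:kazhdan-separation}.

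There is no serious obstacle. The only point needing care is the unpacking of the Kazhdan pair definition. It is phrased for representations admitting an \((S,\varepsilon)\)-invariant vector, with strict inequality. Negating it for \(\pi_1\) gives the non-strict bound \(\sup_{s\in S}\|\pi_1(s)\xi_1-\xi_1\|\geq\varepsilon\|\xi_1\|\), which is exactly what the last step uses.
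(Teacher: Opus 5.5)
Your proof is correct and is essentially the paper's argument: you project \(\xi\) onto \((\mathcal H^G)^\perp\), apply the Kazhdan pair to the restricted representation (which has no nonzero invariant vectors) to get \(\varepsilon\|\xi_1\|\leq\delta\), and then bound every displacement by \(2\|\xi_1\|\). Your handling of the degenerate cases and of the strict inequality in the definition only spells out details the paper leaves implicit.
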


\begin{proof}
Put \(\eta=(\id_{\mathcal H}-P_G)\xi\).
If \(\eta=0\), the assertion is immediate. Otherwise the restriction of \(\pi\) to
\((\mathcal H^G)^\perp\) has no nonzero invariant vectors, so the
Kazhdan assumption gives
\[
 \varepsilon\|\eta\|
 \leq
 \sup_{s\in S}\|\pi(s)\eta-\eta\|
 =
 \sup_{s\in S}\|\pi(s)\xi-\xi\|.
\]
On the other hand, for every \(g\in G\),
\[
 \|\pi(g)\xi-\xi\|
 =
 \|\pi(g)\eta-\eta\|
 \leq2\|\eta\|.
\]
Combining the two inequalities proves the claim.
\end{proof}

To obtain a Kazhdan constant independent of the matrix size, we
use the codistance criterion of Ershov and Jaikin-Zapirain.
Following \cite[Section~2]{EJ}, define the \emph{codistance}
of closed subspaces \(U_1,\ldots,U_n\) of a Hilbert space
\(\mathcal H\) by
\[
 \rho(U_1,\ldots,U_n)
 =
 \sup_{\substack{u_i\in U_i\\(u_1,\ldots,u_n)\neq0}}
 \frac{\|u_1+\cdots+u_n\|^2}
 {n\sum_{i=1}^n\|u_i\|^2}.
\]
We set this value to zero if all \(U_i\) are zero.
If \(P_i\) denotes the orthogonal projection onto \(U_i\), then
\begin{equation}\label{eq:codistance-projection}
 \rho(U_1,\ldots,U_n)
 =
 \frac1n
 \sup_{\xi\neq0}
 \frac{\sum_{i=1}^n\|P_i\xi\|^2}{\|\xi\|^2}.
\end{equation}
This follows from \(\|T\|=\|T^*\|\) for the summation operator
\(T:\bigoplus_{i=1}^n U_i\to\mathcal H\), whose adjoint is
\(T^*\xi=(P_1\xi,\ldots,P_n\xi)\).
When \(\mathcal H=\{0\}\), both sides are understood to be zero.

For subgroups \(H_1,\ldots,H_n\leq G\), define
\[
 \rho(\{H_i\}_{i=1}^n,G)
 =
 \sup_{\substack{\pi:G\to U(\mathcal H)\\
                 \pi\in\operatorname{Rep}_0(G)}}
 \rho\bigl(
   \mathcal H^{H_1},\ldots,\mathcal H^{H_n}
 \bigr).
\]
If \(H_1,\ldots,H_n\) generate \(G\) and
\(\rho(\{H_i\}_{i=1}^n,G)<1\), then
\cite[Lemma~3.1(a)]{EJ} gives
\begin{equation}\label{eq:codistance-kazhdan}
 \kappa\left(
   G,\bigcup_{i=1}^nH_i
 \right)
 \geq
 \sqrt{2\bigl(1-\rho(\{H_i\}_{i=1}^n,G)\bigr)}.
\end{equation}

For the integer matrix group, we use subgroups consisting of rank-one matrices
whose column factor lies in the finite cube \(\{0,1\}^r\).

\begin{proposition}\label{prop:integer-matrices-kazhdan}
  Let \(r\geq1\), let \(G=(\Mat_r(\setZ),+)\) and let \(V=\{0,1\}^r\).
  For \(v\in V\), set \(H_v = \Set*{
   vw^{\mathsf T}
   \given
   w\in\setZ^r
 }\).
Then
\[
 \rho(\{H_v\}_{v\in V},G)\leq\frac12.
\]
Consequently, if
\[
 S=\bigcup_{v\in V}H_v
 =
 \Set*{
   vw^{\mathsf T}
   \given
   v\in\{0,1\}^r,\;
   w\in\setZ^r
 },
\]
then \((S,1)\) is a Kazhdan pair for \(G\).
\end{proposition}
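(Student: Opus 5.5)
The plan is to compute everything spectrally, identifying \(G=(\Mat_r(\setZ),+)\) with \(\setZ^{r\times r}\) and its dual with \(\mathbb T^{r\times r}\). A matrix \(X\) pairs with \(\Theta=(\theta_{ij})\) through \(\exp(2\pi\mathrm i\sum_{i,j}X_{ij}\theta_{ij})\). Fix \(\pi\in\operatorname{Rep}_0(G)\) with projection-valued measure \(E\), and for \(\xi\) the spectral measure \(\nu_\xi\) as in \Cref{sec:unitary-preliminaries}.

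\textbf{Step 1: annihilators.} The character of \(vw^{\mathsf T}\) evaluates to \(\exp(2\pi\mathrm i\sum_j w_j(v^{\mathsf T}\Theta)_j)\). Letting \(w\) range over \(\setZ^r\) gives
\[
H_v^\perp=\Set*{\Theta\in\mathbb T^{r\times r}\given v^{\mathsf T}\Theta=0\text{ in }\mathbb T^{1\times r}}.
\]
By \eqref{eq:spectral-fixed-space} the projection onto \(\mathcal H^{H_v}\) is \(E(H_v^\perp)\). Hence
\[
\sum_{v\in V}\|P_{H_v}\xi\|^2=\int_{\mathbb T^{r\times r}} N(\Theta)\,d\nu_\xi(\Theta),
\qquad N(\Theta):=\#\Set*{v\in\{0,1\}^r\given v^{\mathsf T}\Theta=0}.
\]

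\textbf{Step 2: counting, the key point.} I claim \(N(\Theta)\le 2^{r-1}\) whenever \(\Theta\neq0\). Pick a column \(\theta=\Theta e_j\neq0\) and an index \(i\) with \(\theta_i\neq0\) in \(\mathbb T\). Partition \(\{0,1\}^r\) into pairs \(\{v,v'\}\), where \(v'\) is \(v\) with its \(i\)-th coordinate flipped. Then \(v^{\mathsf T}\theta\) and \(v'^{\mathsf T}\theta\) differ by \(\pm\theta_i\neq0\), so at most one vector in each pair satisfies \(v^{\mathsf T}\theta=0\). This gives the bound.

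Since \(\pi\) has no nonzero invariant vectors, \(\nu_\xi(\{0\})=0\). Therefore
\[
\sum_v\|P_{H_v}\xi\|^2\le 2^{r-1}\nu_\xi(\mathbb T^{r\times r})=2^{r-1}\|\xi\|^2.
\]
Applying \eqref{eq:codistance-projection} with \(n=2^r\) subspaces gives \(\rho(\mathcal H^{H_v})_{v\in V}\le\frac12\). Taking the supremum over \(\pi\) yields \(\rho(\{H_v\}_{v\in V},G)\le\frac12\).

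\textbf{Step 3: conclusion.} The subgroups \(H_{e_i}\) consist of all matrices supported in row \(i\), so the \(H_v\) generate \(G\). Then \eqref{eq:codistance-kazhdan} gives \(\kappa(G,S)\ge\sqrt{2(1-\tfrac12)}=1\), so \((S,1)\) is a Kazhdan pair.

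The only substantive step is the counting bound in Step 2. The main thing to check there is that the pairing argument works in \(\mathbb T\), not only over a field. It does, because only a single nonzero coordinate \(\theta_i\) is used.
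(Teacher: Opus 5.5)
Your proposal is correct and follows the paper's proof essentially step for step. It uses the same annihilator $H_v^\perp=\{\Theta : v^{\mathsf T}\Theta=0\}$ and the same pairing $\{v,v+e_i\}$, showing that at most $2^{r-1}$ cube vectors annihilate a nonzero $\Theta$. The paper chooses $i$ via a nonzero row $e_i^{\mathsf T}\Theta$ and you via a nonzero entry, which amounts to the same thing. The remaining steps also match: $\nu_\xi(\{0\})=0$, the projection formula for the codistance with $n=2^r$, generation by the $H_{e_i}$, and the Ershov--Jaikin-Zapirain bound giving $\kappa(G,S)\geq 1$.
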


\begin{proof}
  For \(1\leq i,j\leq r\), we have
  \[
    e_{ij}=e_i e_j^{\mathsf T}\in H_{e_i}.
  \]
Thus the subgroups \(H_v\), \(v\in V\), generate \(G\).

Let \(\pi:G\to U(\mathcal H)\) belong to \(\operatorname{Rep}_0(G)\).
For \(v\in V\), write \(P_v=P_{H_v}\) for the orthogonal
projection onto \(\mathcal H^{H_v}\). We shall prove that
\begin{equation}\label{eq:matrix-projection-average}
 \frac1{2^r}\sum_{v\in V}
 \|P_v\xi\|^2
 \leq
 \frac12\|\xi\|^2
 \qquad(\xi\in\mathcal H).
\end{equation}

Write \(\Mat_r(\mathbb T)\) for the additive group of
\(r\times r\) matrices with entries in \(\mathbb T\).
Identify the Pontryagin dual of \(G\) with this group,
using the convention from \Cref{sec:unitary-preliminaries}:
\[
 \chi_N(\Theta)
 =\exp\left(2\pi\mathrm{i}\sum_{j,k=1}^r n_{jk}\theta_{jk}\right),
\]
where \(N=(n_{jk})\in\Mat_r(\setZ)\) and
\(\Theta=(\theta_{jk})\in\Mat_r(\mathbb T)\).
For \(v\in V\) and \(w\in\setZ^r\), \(\chi_{vw^{\mathsf T}}(\Theta)=\exp(2\pi\mathrm{i}\,v^{\mathsf T}\Theta w)\).
Hence
\[
 \begin{aligned}
 H_v^\perp
 &=\Set*{\Theta\in\Mat_r(\mathbb T)\given
                 v^{\mathsf T}\Theta w=0\text{ for every }w\in\setZ^r}\\
 &=\Set*{\Theta\in\Mat_r(\mathbb T)\given v^{\mathsf T}\Theta=0}.
 \end{aligned}
\]
The last equality follows by taking \(w\) to be each standard
basis vector; the equalities are in \(\mathbb T\) and
\(\mathbb T^{1\times r}\), respectively.

Let \(\nu_\xi\) be the scalar spectral measure associated with
\(\xi\).
By \eqref{eq:spectral-fixed-space},
\begin{equation}\label{eq:matrix-fixed-projection}
 \|P_v\xi\|^2=\nu_\xi(H_v^\perp).
\end{equation}
Moreover,
\[
 \nu_\xi(\Mat_r(\mathbb T))=\|\xi\|^2,
 \qquad
 \nu_\xi(\{0\})=0,
\]
since \(\pi\) has no nonzero \(G\)-invariant vectors.

For \(\Theta\in\Mat_r(\mathbb T)\), let
\[
 V_\Theta
 =
 \Set*{
   v\in V
   \given
   v^{\mathsf T}\Theta=0
 }.
\]
We claim that
\begin{equation}\label{eq:cube-half}
 |V_\Theta|\leq2^{r-1}
 \qquad(\Theta\neq0).
\end{equation}
Indeed, if \(\Theta\neq0\), choose \(i\) such that \(e_i^{\mathsf T}\Theta\neq0\).
Partition \(V\) into the \(2^{r-1}\) pairs
\[
 \{v,v+e_i\},
 \qquad v_i=0.
\]
At most one member of each pair belongs to \(V_\Theta\), since
\[
 v^{\mathsf T}\Theta=(v+e_i)^{\mathsf T}\Theta=0 \Longrightarrow e_i^{\mathsf T}\Theta=0,
\]
contrary to the choice of \(i\).
This proves \eqref{eq:cube-half}.

Using \eqref{eq:matrix-fixed-projection} and finite summation,
we obtain
\[
\frac1{2^r}\sum_{v\in V}\|P_v\xi\|^2
 =
 \frac1{2^r}\sum_{v\in V}\nu_\xi(H_v^\perp)=
 \int_{\Mat_r(\mathbb T)}
 \frac{|V_\Theta|}{2^r}\,d\nu_\xi(\Theta).
\]
Since \(\nu_\xi(\{0\})=0\), \eqref{eq:cube-half} gives
\[
 \frac1{2^r}\sum_{v\in V}\|P_v\xi\|^2
 \leq
 \frac12
 \nu_\xi(\Mat_r(\mathbb T)\setminus\{0\})
 =
 \frac12\|\xi\|^2.
\]
By \eqref{eq:codistance-projection} and the arbitrariness of
\(\pi\in\operatorname{Rep}_0(G)\), this proves
\[
 \rho(\{H_v\}_{v\in V},G)\leq\frac12.
\]

Finally, since the subgroups \(H_v\) generate \(G\),
\eqref{eq:codistance-kazhdan} yields
\[
 \kappa(G,S)
 \geq
 \sqrt{2\left(1-\frac12\right)}=1.
\]
Therefore \((S,1)\) is a Kazhdan pair.
\end{proof}

Applying \Cref{lem:kazhdan-separation} with constant \(1\)
converts this Kazhdan pair into the displacement bound used below.

\begin{corollary}\label{cor:integer-matrix-displacement}
  Let \(r\geq1\), let \(\sigma:(\Mat_r(\setZ),+)\longrightarrow U(\mathcal H)\)
  be a unitary representation.
  Then, for every \(\xi\in\mathcal H\),
\begin{equation}\label{eq:matrix-conclusion}
 \sup_{N\in\Mat_r(\setZ)}
 \|\sigma(N)\xi-\xi\|
 \leq
 2
 \sup_{\substack{v\in\{0,1\}^r\\ w\in\setZ^r}}
 \|\sigma(vw^{\mathsf T})\xi-\xi\|.
\end{equation}
\end{corollary}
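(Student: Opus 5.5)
This is a direct combination of \Cref{prop:integer-matrices-kazhdan} with \Cref{lem:kazhdan-separation}. Set \(G=(\Mat_r(\setZ),+)\) with the discrete topology, so that \(\sigma\) is automatically a representation in the sense of \Cref{sec:unitary-preliminaries}. Let
\[
 S=\Set*{vw^{\mathsf T}\given v\in\{0,1\}^r,\ w\in\setZ^r}.
\]
By \Cref{prop:integer-matrices-kazhdan}, \((S,1)\) is a Kazhdan pair for \(G\). Note that the statement is posed for every unitary representation \(\sigma\), not only those in \(\operatorname{Rep}_0(G)\). This is exactly why we pass through \Cref{lem:kazhdan-separation} rather than using the Kazhdan constant directly: the lemma handles the invariant part of \(\xi\) by projecting it away.

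Applying \Cref{lem:kazhdan-separation} with \(\varepsilon=1\) to \(\sigma\) and an arbitrary \(\xi\in\mathcal H\) gives
\[
 \sup_{N\in\Mat_r(\setZ)}\|\sigma(N)\xi-\xi\|\leq 2\sup_{s\in S}\|\sigma(s)\xi-\xi\|.
\]
Since \(S\) is parametrized by \(v\in\{0,1\}^r\) and \(w\in\setZ^r\), the right-hand side is precisely the supremum in \eqref{eq:matrix-conclusion}.

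The only points needing care are trivial ones. When \(\xi=0\), or when \(\xi\) is \(G\)-invariant, both sides vanish; this is already covered by the case \(\eta=0\) in the proof of the lemma. The suprema may be infinite only formally, since all displacements are bounded by \(2\|\xi\|\). There is no real obstacle here; all the substantive work sits in \Cref{prop:integer-matrices-kazhdan}, namely the codistance bound and the Ershov--Jaikin-Zapirain criterion.
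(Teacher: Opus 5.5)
Your proposal is correct and matches the paper's proof: apply \Cref{lem:kazhdan-separation} with \(\varepsilon=1\) to the Kazhdan pair \((S,1)\) from \Cref{prop:integer-matrices-kazhdan}. You are also right that the lemma is what removes the restriction to \(\operatorname{Rep}_0(G)\), since it projects away the \(G\)-invariant part of \(\xi\).
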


\begin{proof}
Apply \Cref{lem:kazhdan-separation} with \(\varepsilon=1\) to
the Kazhdan pair in \Cref{prop:integer-matrices-kazhdan}.
\end{proof}

\begin{remark}\label{rem:direct-averaging}
The Kazhdan pair in \Cref{prop:integer-matrices-kazhdan} can
also be obtained directly by averaging. Use \(G,V,S\) and
\(V_\Theta\) as in the proposition and its proof, and put
\(F_M=\{-M,\ldots,M\}^r\) for positive integers \(M\).
For \(\Theta\in\Mat_r(\mathbb T)\) and \(v\in V\), the product
of the one-dimensional character averages gives
\[
 \frac1{|F_M|}\sum_{w\in F_M}\overline{\chi_{vw^{\mathsf T}}(\Theta)}
 \longrightarrow
 \begin{cases}
  1,&v^{\mathsf T}\Theta=0,\\
  0,&v^{\mathsf T}\Theta\neq0.
 \end{cases}
\]
Let \(\pi:G\to U(\mathcal H)\) have no nonzero invariant
vectors, and let \(\eta\in\mathcal H\). Its spectral measure
satisfies \(\nu_\eta(\{0\})=0\) and
\(\nu_\eta(\Mat_r(\mathbb T))=\|\eta\|^2\).
Using \(|z-1|^2=2-2\Rea z\) for \(|z|=1\), the spectral
formula and dominated convergence give
\[
 \begin{aligned}
 &\lim_{M\to\infty}\frac1{2^r|F_M|}
   \sum_{v\in V}\sum_{w\in F_M}
   \|\pi(vw^{\mathsf T})\eta-\eta\|^2\\
 &\qquad=2\int_{\Mat_r(\mathbb T)}
       \left(1-\frac{|V_\Theta|}{2^r}\right)\,d\nu_\eta(\Theta)
 \geq\|\eta\|^2.
 \end{aligned}
\]
The averaged squared character differences are bounded by \(4\),
and the last inequality follows from \eqref{eq:cube-half}.
Each finite average is at most
\(\sup_{s\in S}\|\pi(s)\eta-\eta\|^2\), even though repeated
matrices may occur in the sum. Hence
\[
 \|\eta\|\leq\sup_{s\in S}\|\pi(s)\eta-\eta\|.
\]
Since this holds for every \(\pi\in\operatorname{Rep}_0(G)\), \((S,1)\) is a Kazhdan pair.
\end{remark}

\section{Elementary matrices and exoticness of the unit group}\label{sec:density}

In this section we prove that \(\GL(Q)\) is exotic.
The argument combines triviality on the elementary subgroup
with its density in \(\GL(Q)\).

\begin{theorem}\label{thm:roots}
Every strongly continuous unitary representation of
\(\GL(Q)\) is trivial on
\(\E_\mu(R)\).
\end{theorem}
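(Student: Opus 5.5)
Since \(\E_\mu(R)\) is generated by the images \(\iota_k(1_{n_k}+ae_{ij})\) with \(a\in R\) and \(i\ne j\), it suffices to fix a representation \(\pi:\GL(Q)\to U(\mathcal H)\), a vector \(\xi\in\mathcal H\), and one such generator \(g=\iota_k(u)\) with \(u=1_{n_k}+ae_{ij}\), and to prove \(\pi(g)\xi=\xi\). Let \(\varepsilon>0\). By strong continuity there is \(\delta>0\) such that \(\|\pi(h)\xi-\xi\|<\varepsilon\) whenever \(d(h,1_Q)<\delta\). We then choose \(\ell>k\) with \(1/n_\ell<\delta\), and put \(r=n_\ell/n_k\) and \(m=n_k\).

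Next we realize \(g\) inside a copy of \((\Mat_r(\setZ),+)\) at level \(\ell\). By \eqref{eq:canonical-compatibility}, \(g=\iota_\ell(1_r\otimes u)=\iota_\ell(1_{n_\ell}+1_r\otimes ae_{ij})\). For \(N\in\Mat_r(\setZ)\) define
\[
 \sigma(N)=\pi\bigl(\iota_\ell(1_{n_\ell}+N\otimes ae_{ij})\bigr).
\]
Since \(i\ne j\), we have \((ae_{ij})^2=0\), so \((N\otimes ae_{ij})(N'\otimes ae_{ij})=NN'\otimes (ae_{ij})^2=0\). Hence \(N\mapsto 1+N\otimes ae_{ij}\) is a homomorphism from \((\Mat_r(\setZ),+)\) into \(\GL_{n_\ell}(R)\), with inverse given by \(-N\), and landing in \(\E_{n_\ell}(R)\). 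This uses that integers are central. Thus \(\sigma\) is a unitary representation of the discrete group, and \(\sigma(1_r)\xi=\pi(g)\xi\). Applying \Cref{cor:integer-matrix-displacement},
\[
 \|\pi(g)\xi-\xi\|\leq 2\sup_{v\in\{0,1\}^r,\,w\in\setZ^r}\|\sigma(vw^{\mathsf T})\xi-\xi\|.
\]

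The key step is bounding the rank of each perturbation: we claim \(d\bigl(\iota_\ell(1+vw^{\mathsf T}\otimes ae_{ij}),1_Q\bigr)\leq 1/n_\ell\). Indeed, \(vw^{\mathsf T}\otimes ae_{ij}=(v\otimes e_i)\,a\,(w\otimes e_j)^{\mathsf T}\) by the mixed-product property, with the scalar \(a\) placed in the middle. This is a column times a \(1\times1\) entry times a row, so its rank is at most \(1\) by (SM2) and the size bound. Normalizing by \(n_\ell\), using \eqref{eq:canonical-rank}, gives distance at most \(1/n_\ell<\delta\). Therefore every term in the supremum is less than \(\varepsilon\), so \(\|\pi(g)\xi-\xi\|\leq2\varepsilon\). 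Letting \(\varepsilon\to0\) gives \(\pi(g)\xi=\xi\).

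The main obstacle is this uniformity: the bound must not depend on \(r\). That is exactly what the size-independent Kazhdan constant \(1\) from \Cref{prop:integer-matrices-kazhdan} supplies, combined with the observation that all the test elements \(vw^{\mathsf T}\), for arbitrary integer \(w\), are rank-one perturbations. The remaining care is bookkeeping. One must check the Kronecker index ordering, so that \(1_r\otimes u\) matches \(\phi_{\ell,k}\). One must also note that faithfulness is never needed, since only upper bounds on rank are used.
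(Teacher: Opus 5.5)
Your proposal is correct and follows the paper's proof essentially step for step. It uses the same square-zero amplification \(N\mapsto 1_{n_\ell}+N\otimes ae_{ij}\) giving a representation of \((\Mat_r(\setZ),+)\), the same rank-one bound \(1/n_\ell\) on the test elements \(vw^{\mathsf T}\otimes ae_{ij}\), and the same appeal to \Cref{cor:integer-matrix-displacement}. The differences are only cosmetic: you use an explicit \(\varepsilon\)--\(\delta\) argument where the paper uses the modulus \(\beta_\xi(t)\), and you write \(a\) as a \(1\times1\) middle factor where the paper absorbs it into the column \(v\otimes ae_i\).
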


\begin{proof}
Let \(\pi:\GL(Q)\to U(\mathcal H)\) be a strongly continuous
unitary representation. Fix \(k\) with \(n=n_k\geq2\), distinct
indices \(i,j\in\Set{1,\ldots,n}\), and \(a\in R\), and let
\[
 u=1_n+ae_{ij}\in\E_n(R).
\]
It suffices to prove that \(\pi(\iota_k(u))=\id_{\mathcal H}\),
since these elements generate \(\E_\mu(R)\).

For each \(\ell>k\), set \(r=n_\ell/n\) and define the additive map \(T_r:\Mat_r(\setZ)\to\Mat_{nr}(R)\) by
\[
 T_r(N)=N\otimes(ae_{ij}).
\]
Since \(i\ne j\), we have \((ae_{ij})^2=0\). Integer coefficients
are central in \(R\), so for all \(N,N'\in\Mat_r(\setZ)\),
\[
 T_r(N)T_r(N')=NN'\otimes(ae_{ij})^2=0.
\]
Consequently,
\[
 (1_{nr}+T_r(N))(1_{nr}+T_r(N'))=1_{nr}+T_r(N+N'),
\]
and \(1_{nr}+T_r(N)\) is invertible with inverse
\(1_{nr}-T_r(N)\).
Hence
\[
\sigma_r(N)=\pi\bigl(\iota_\ell(1_{nr}+T_r(N))\bigr).
\]
defines a unitary representation of the discrete additive group \(M_r(\mathbb Z)\).

For \(v,w\in\setZ^r\), the factorization
\[
 T_r(vw^{\mathsf T})
 =(v\otimes ae_i)(w^{\mathsf T}\otimes e_j^{\mathsf T})
\]
expresses this matrix as the product of an \(nr\times1\) column
and a \(1\times nr\) row. The Sylvester rank axioms therefore give \(\rk(T_r(vw^{\mathsf T}))\leq1\), and hence
\[
 d\bigl(\iota_\ell(1_{nr}+T_r(vw^{\mathsf T})),1_Q\bigr)
 =\rk_\mu\bigl(\iota_\ell(T_r(vw^{\mathsf T}))\bigr)=\rk_{nr}(T_r(vw^{\mathsf T}))\leq\frac1{nr}.
\]
This bound is uniform in \(v,w\), regardless of their integer entries.

Fix \(\xi\in\mathcal H\).
For \(t>0\), set
\[
 \beta_\xi(t)=\sup\Set*{\|\pi(g)\xi-\xi\|\given
             g\in\GL(Q),\ d(g,1_Q)\leq t}.
\]
Unitarity gives \(0\leq\beta_\xi(t)\leq2\|\xi\|\), and strong
continuity at the identity implies \(\beta_\xi(t)\to0\) as
\(t\downarrow0\). The preceding rank bound gives
\[
 \|\sigma_r(vw^{\mathsf T})\xi-\xi\|
 \leq\beta_\xi\bigl((nr)^{-1}\bigr)
 \qquad(v,w\in\setZ^r).
\]
Applying \Cref{cor:integer-matrix-displacement} to \(\sigma_r\) therefore yields
\[
 \sup_{N\in\Mat_r(\setZ)}\|\sigma_r(N)\xi-\xi\|
 \leq2\beta_\xi\bigl((nr)^{-1}\bigr).
\]
On the other hand, evaluating at \(1_r\) gives the image of \(u\)
under \(\phi_{\ell,k}\):
\[
 1_{nr}+T_r(1_r)=1_r\otimes(1_n+ae_{ij})
 =\phi_{\ell,k}(u).
\]
By \eqref{eq:canonical-compatibility},
\[
 \sigma_r(1_r)=\pi\bigl(\iota_\ell(\phi_{\ell,k}(u))\bigr)
 =\pi(\iota_k(u)).
\]
Since \(nr=n_\ell\to\infty\), we conclude that
\[
 \|\pi(\iota_k(u))\xi-\xi\|
 \leq2\beta_\xi(n_\ell^{-1})\longrightarrow0.
\]
The left-hand side is independent of \(\ell\), so it is zero.
As \(\xi\) was arbitrary, \(\pi(\iota_k(u))=\id_{\mathcal H}\).
This proves that \(\pi\) is trivial on \(\E_\mu(R)\).
\end{proof}


To pass from \(\E_\mu(R)\) to \(\GL(Q)\), it remains to prove
density.
The required approximation follows from a quantitative form of the Whitehead block identity
\cite[Example~III.1.2.1]{WeibelKbook}:
an approximate right inverse for \(X\) yields, after amplification, a nearby
element of the elementary subgroup.

\begin{lemma}\label{lem:correction}
  Let \((R,\rk)\) be a rank ring.
For \(X,Y\in\Mat_n(R)\) and integer \(r\geq2\), there is
\(U\in\E_{nr}(R)\) with
\[
 \rk_{nr}(1_r\otimes X-U)
 \leq2(1-1/r)\rk_n(1_n-XY)+\frac1r.
\]
\end{lemma}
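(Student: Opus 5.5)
The plan is to prove a two-block Whitehead estimate first and then glue together \(r-1\) copies of it, each pairing one of the first \(r-1\) diagonal blocks with the last block. Write \(\delta=\rk_n(1_n-XY)\), so that \(\rk(1_n-XY)=n\delta\).

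\emph{Step 1 (two-block estimate).} In \(\Mat_{2n}(R)\) put
\[
 A=\begin{pmatrix}1_n&X\\0&1_n\end{pmatrix},\quad
 B=\begin{pmatrix}1_n&0\\-Y&1_n\end{pmatrix},\quad
 W=\begin{pmatrix}0&-1_n\\1_n&0\end{pmatrix}.
\]
Block unitriangular matrices lie in \(\E_{2n}(R)\), and \(W\) is the usual product of three of them, so \(F=ABAW\in\E_{2n}(R)\). A direct multiplication gives
\[
 F=\begin{pmatrix}2X-XYX&-(1_n-XY)\\1_n-YX&Y\end{pmatrix},
\]
hence
\[
 F-\diag(X,Y)=\begin{pmatrix}(1_n-XY)X&-(1_n-XY)\\1_n-YX&0\end{pmatrix}.
\]
The first block row factors as \((1_n-XY)\bigl(X\;\;-1_n\bigr)\), so its rank is at most \(n\delta\). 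The second block row has rank at most \(\rk(1_n-YX)\). I will show \(\rk(1_n-YX)=\rk(1_n-XY)\) from the axioms, using the standard block identity: \(\diag(1_n-XY,1_n)\) and \(\diag(1_n,1_n-YX)\) are both obtained from \(\begin{pmatrix}1_n&X\\Y&1_n\end{pmatrix}\) by invertible elementary row and column operations. Subadditivity (splitting the matrix into its two block rows) then gives
\[
 \rk\bigl(F-\diag(X,Y)\bigr)\le 2n\delta.
\]

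\emph{Step 2 (gluing).} For \(1\le i\le r-1\), let \(F_i\in\E_{nr}(R)\) be the copy of \(F\) acting on block positions \(i\) and \(r\), with the identity on the other blocks; embedding \(\E_{2n}(R)\) on any pair of block positions stays inside \(\E_{nr}(R)\). Let \(D_i\) be the corresponding copy of \(\diag(X,Y)\), so that \(\rk(F_i-D_i)\le2n\delta\). Put \(U=F_1\cdots F_{r-1}\). The telescoping identity
\[
 F_1\cdots F_{r-1}-D_1\cdots D_{r-1}=\sum_i F_1\cdots F_{i-1}(F_i-D_i)D_{i+1}\cdots D_{r-1},
\]
together with (SM2) and subadditivity, gives \(\rk(U-D_1\cdots D_{r-1})\le 2(r-1)n\delta\). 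Since the \(D_i\) are block diagonal, \(D_1\cdots D_{r-1}=\diag(X,\dots,X,Y^{r-1})\). This differs from \(1_r\otimes X\) only in the last \(n\times n\) block, so it contributes rank at most \(n\). Dividing by \(nr\) yields
\[
 \rk_{nr}(1_r\otimes X-U)\le 2(1-1/r)\delta+\tfrac1r,
\]
which is the claimed bound.

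The main obstacle is the bookkeeping in Step 1: obtaining exactly the constant \(2\) rather than \(3\). This relies on bounding the error by block rows instead of entry by entry, and on the equality \(\rk(1_n-YX)=\rk(1_n-XY)\), which has to be derived from the Sylvester axioms rather than assumed.
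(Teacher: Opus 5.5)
Your proposal is correct and is essentially the paper's proof. It uses the same product \(F=ABAW\in\E_{2n}(R)\), the same derivation of \(\rk(1_n-XY)=\rk(1_n-YX)\) from the block identity, the same copies on block positions \((i,r)\), the same telescoping sum, and the same final comparison with \(\diag(X,\ldots,X,Y^{r-1})\). The only cosmetic difference is that you bound \(\rk(F-\diag(X,Y))\) by splitting the difference into its block rows, whereas the paper writes it as \(\diag(1_n-XY,1_n-YX)\begin{pmatrix}X&-1_n\\1_n&0\end{pmatrix}\) and applies the product inequality and block additivity.
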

\begin{proof}
  The identities
  \begin{align*}
    \begin{pmatrix}1_n&0\\-Y&1_n\end{pmatrix}
 \begin{pmatrix}1_n&X\\Y&1_n\end{pmatrix}\begin{pmatrix}1_n&-X\\0&1_n\end{pmatrix}
    & =\diag(1_n,1_n-YX),\\
    \begin{pmatrix}1_n&-X\\0&1_n\end{pmatrix}
 \begin{pmatrix}1_n&X\\Y&1_n\end{pmatrix}\begin{pmatrix}1_n&0\\-Y&1_n\end{pmatrix}
 & =\diag(1_n-XY,1_n)
  \end{align*}
give, by invariance under elementary multiplication and block additivity,
\[
 \rk(1_n-XY)=\rk(1_n-YX).
\]
Consider the elementary block product
\[
 W=
 \begin{pmatrix}1_n&X\\0&1_n\end{pmatrix}
 \begin{pmatrix}1_n&0\\-Y&1_n\end{pmatrix}
 \begin{pmatrix}1_n&X\\0&1_n\end{pmatrix}
 \begin{pmatrix}0&-1_n\\1_n&0\end{pmatrix}
 =
 \begin{pmatrix}2X-XYX&XY-1_n\\1_n-YX&Y\end{pmatrix}\in \E_{2n}(R).
\]
Each upper block matrix is a product of the elementary matrices
\(1_{2n}+x_{ij}e_{i,n+j}\); the lower block matrix is treated in
the same way, using the entries of \(-Y\).
The off-diagonal block matrix also belongs to \(\E_{2n}(R)\), since
\[
 \begin{pmatrix}0&-1_n\\1_n&0\end{pmatrix}
 =\begin{pmatrix}1_n&-1_n\\0&1_n\end{pmatrix}
  \begin{pmatrix}1_n&0\\1_n&1_n\end{pmatrix}
  \begin{pmatrix}1_n&-1_n\\0&1_n\end{pmatrix}.
\]
We have
\[
 W-\diag(X,Y)
 =\begin{pmatrix}X-XYX&XY-1_n\\1_n-YX&0\end{pmatrix}
 =\begin{pmatrix}1_n-XY&0\\0&1_n-YX\end{pmatrix}
  \begin{pmatrix}X&-1_n\\1_n&0\end{pmatrix},
\]
the product inequality and block additivity give
\[
 \rk(W-\diag(X,Y))
 \leq\rk(1_n-XY)+\rk(1_n-YX)=2\rk(1_n-XY).
\]
For \(1\leq h<r\),
define
\begingroup
\setlength{\arraycolsep}{4pt}
\[
 W_h=
 \begin{pmatrix}
  1_{(h-1)n}&0&0&0\\
  0&2X-XYX&0&XY-1_n\\
  0&0&1_{(r-h-1)n}&0\\
  0&1_n-YX&0&Y
 \end{pmatrix},\quad
 D_h=
 \begin{pmatrix}
  1_{(h-1)n}&0&0&0\\
  0&X&0&0\\
  0&0&1_{(r-h-1)n}&0\\
  0&0&0&Y
 \end{pmatrix}.
\]
\endgroup
Each zero block has the size determined by its block row and column.
Blocks of size zero are omitted when \(h=1\) or \(h=r-1\).
The elementary factorization of \(W\), applied to block indices
\(h,r\), gives \(W_h\in\E_{nr}(R)\).
Moreover,
\[
 W_h-D_h = \begin{pmatrix}
  0_{(h-1)n}&0&0&0\\
  0&X-XYX&0&XY-1_n\\
  0&0&0_{(r-h-1)n}&0\\
  0&1_n-YX&0&0_n
 \end{pmatrix}.
\]
Hence
\[
 \rk(W_h-D_h)=\rk(W-\diag(X,Y))\leq2\rk(1_n-XY).
\]
Set \(U=W_1\cdots W_{r-1}\), which belongs to \(\E_{nr}(R)\).
The identity
\[
 U-D_1\cdots D_{r-1}
 =\sum_{h=1}^{r-1}W_1\cdots W_{h-1}(W_h-D_h)D_{h+1}\cdots D_{r-1}
\]
is valid in this order over a noncommutative ring, with empty products
equal to the identity.
Rank subadditivity and the product inequality give
\[
 \rk(U-D_1\cdots D_{r-1})
 \leq\sum_{h=1}^{r-1}\rk(W_h-D_h)
 \leq2(r-1)\rk(1_n-XY).
\]
Finally,
\[
 D_1\cdots D_{r-1}
 =\diag(\underbrace{X,\ldots,X}_{r-1\text{ copies}},Y^{r-1}),
\]
so
\[
 \rk(1_r\otimes X-D_1\cdots D_{r-1})
 =\rk(X-Y^{r-1})\leq n.
\]
Consequently,
\[
 \begin{aligned}
 \rk_{nr}(1_r\otimes X-U)
 &\leq\frac{\rk(1_r\otimes X-D_1\cdots D_{r-1})
                +\rk(D_1\cdots D_{r-1}-U)}{nr}\\
 &\leq\frac{n+2(r-1)\rk(1_n-XY)}{nr}\\
 &=2(1-1/r)\rk_n(1_n-XY)+\frac1r.
 \end{aligned}
\]
\end{proof}

For a unit \(g\in\GL(Q)\), simultaneous approximations to \(g\)
and \(g^{-1}\) make the defect \(1_n-XY\) arbitrarily small.
The remaining term \(1/r\) in \Cref{lem:correction} tends to zero
under further amplification, giving density.

\begin{proposition}\label{prop:density}
The subgroups \(\E_\mu(R)\) and \(\GL_\mu(R)\) are dense in
\(\GL(Q)\). More precisely,
\[
 \overline{\E_\mu(R)}
 =\overline{\GL_\mu(R)}=\GL(Q),
\]
where closures are taken in the rank metric on \(Q\).
\end{proposition}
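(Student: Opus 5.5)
The plan is to show three things: \(\GL(Q)\) is closed in \(Q\); every \(g\in\GL(Q)\) is a limit of elements of \(\E_\mu(R)\); and \(\E_\mu(R)\leq\GL_\mu(R)\leq\GL(Q)\). Together these give the chain of equalities.

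\emph{Closedness.} Let \(g_m\in\GL(Q)\) converge to \(x\in Q\). By bi-invariance, \(d(g_m^{-1},g_p^{-1})=d(g_m,g_p)\), so \((g_m^{-1})\) is Cauchy and converges to some \(y\in Q\). Multiplication in \(Q\) is continuous: for instance \(\rk_\mu(ab-a'b')\leq\rk_\mu(a-a')+\rk_\mu(b-b')\). Passing to the limit in \(g_mg_m^{-1}=1_Q=g_m^{-1}g_m\) gives \(xy=yx=1_Q\). Hence \(x\in\GL(Q)\), and the closures of \(\E_\mu(R)\) and \(\GL_\mu(R)\) taken in \(Q\) lie inside \(\GL(Q)\).

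\emph{Density.} Fix \(g\in\GL(Q)\) and \(\varepsilon>0\). Since the images \(\iota_k(\Mat_{n_k}(R))\) form an increasing union dense in \(Q\), I can choose one level \(k\) and matrices \(X,Y\in\Mat_{n}(R)\), \(n=n_k\), with
\[
d(\iota_k(X),g)<\varepsilon \quad\text{and}\quad d(\iota_k(Y),g^{-1})<\varepsilon .
\]
For the defect, \(\iota_k\) is a ring homomorphism, so by \eqref{eq:canonical-rank}
\[
\rk_n(1_n-XY)=\rk_\mu\bigl(1_Q-\iota_k(X)\iota_k(Y)\bigr).
\]
Write \(1_Q-\iota_k(X)\iota_k(Y)=g(g^{-1}-\iota_k(Y))+(g-\iota_k(X))\iota_k(Y)\). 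Multiplication by the unit \(g\) preserves rank, and the product inequality handles the second term, so the defect is \(<2\varepsilon\).

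Now choose \(\ell>k\) with \(r=n_\ell/n\geq2\) and \(1/r<\varepsilon\); this is possible because \(\mu\) is unbounded and each \(n_k\mid n_\ell\). \Cref{lem:correction} gives \(U\in\E_{n_\ell}(R)\) with
\[
\rk_{n_\ell}(1_r\otimes X-U)<4\varepsilon+\varepsilon .
\]
Since \(1_r\otimes X=\phi_{\ell,k}(X)\), \eqref{eq:canonical-compatibility} and \eqref{eq:canonical-rank} turn this into \(d(\iota_\ell(U),\iota_k(X))<5\varepsilon\). Therefore \(d(\iota_\ell(U),g)<6\varepsilon\) with \(\iota_\ell(U)\in\E_\mu(R)\).

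This gives \(\GL(Q)\subseteq\overline{\E_\mu(R)}\subseteq\overline{\GL_\mu(R)}\subseteq\GL(Q)\), and all three sets coincide. The only delicate point is the defect estimate. The approximants \(X\) and \(Y\) must come from a common level, which the increasing union provides, and the unit \(g\) is used to absorb rank in the product. Everything else is bookkeeping with \Cref{lem:correction}.
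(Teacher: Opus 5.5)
Your proposal is correct and follows the paper's proof essentially step by step. The steps match: approximants \(X,Y\) of \(g\) and \(g^{-1}\) at a common level, the defect bound \(\rk_n(1_n-XY)<2\varepsilon\), amplification with \(1/r<\varepsilon\) and \Cref{lem:correction} to land within \(6\varepsilon\) of \(g\), and closedness of \(\GL(Q)\) via the Cauchy sequence of inverses. The only difference is cosmetic: with \(x=\iota_k(X)\) and \(y=\iota_k(Y)\), you split the defect as \(g(g^{-1}-y)+(g-x)y\) rather than as the paper's \((g-x)g^{-1}+x(g^{-1}-y)\), which yields the same bound.
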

\begin{proof}
Fix \(g\in\GL(Q)\) and \(0<\varepsilon<1/2\).
By density of \(\bigcup_{k\geq0}\iota_k(\Mat_{n_k}(R))\subset Q\),
choose \(k\geq0\), put \(n=n_k\), and take
\(X,Y\in\Mat_n(R)\), writing
\(x=\iota_k(X)\) and \(y=\iota_k(Y)\), such that
\[
 d(g,x)<\varepsilon,\qquad d(g^{-1},y)<\varepsilon.
\]
The identity
\[
 1_Q-xy=(g-x)g^{-1}+x(g^{-1}-y)
\]
and \eqref{eq:canonical-rank} give
\[
 \rk_n(1_n-XY)
 =\rk_\mu(1_Q-xy)
 \leq d(g,x)+d(g^{-1},y)<2\varepsilon.
\]

Since \(\mu\) is unbounded, choose \(\ell>k\) such that
\(r=n_\ell/n\geq2\) and \(1/r\leq\varepsilon\).
By \Cref{lem:correction}, there is \(U\in\E_{nr}(R)\) with
\(\rk_{nr}(1_r\otimes X-U)\leq5\varepsilon\).

Now \(\iota_\ell(U)\in\E_\mu(R)\), and
\(\iota_\ell(1_r\otimes X)=\iota_k(X)=x\). Hence
\[
 d(g,\iota_\ell(U))
 \leq d(g,x)+d(x,\iota_\ell(U))=d(g,x)+\rk_{nr}(1_r\otimes X-U)
 <6\varepsilon.
\]
As \(\varepsilon\) is arbitrary,
\(\GL(Q)\subseteq\overline{\E_\mu(R)}\).

For the reverse inclusion, let \((u_j)\) be a sequence in
\(\GL(Q)\) converging to \(u\in Q\). Multiplication by units
preserves rank, so
\[
 \begin{aligned}
 d(u_j^{-1},u_m^{-1})
 &=\rk_\mu\bigl(u_j^{-1}(u_m-u_j)u_m^{-1}\bigr)\\
 &=\rk_\mu(u_m-u_j)=d(u_j,u_m)\longrightarrow0
 \quad(j,m\to\infty).
 \end{aligned}
\]
Thus \((u_j^{-1})\) is Cauchy. Completeness of \(Q\) gives
\(v\in Q\) with \(u_j^{-1}\to v\), and continuity of multiplication gives
\[
 uv=\lim_{j\to\infty}u_ju_j^{-1}=1_Q,\qquad
 vu=\lim_{j\to\infty}u_j^{-1}u_j=1_Q.
\]
Hence \(u\in\GL(Q)\) with \(u^{-1}=v\), proving that
\(\GL(Q)\) is closed in \(Q\). Since
\(\E_\mu(R)\subseteq\GL_\mu(R)\subseteq\GL(Q)\), it follows that
\[
 \overline{\E_\mu(R)}\subseteq\overline{\GL_\mu(R)}\subseteq\GL(Q).
\]
Together with the first inclusion, this proves the assertion.
\end{proof}

Combining \Cref{thm:roots,prop:density} gives the unit-group result.

\begin{theorem}\label{thm:main}
\MainTheoremStatement
\end{theorem}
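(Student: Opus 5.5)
The plan is to combine \Cref{thm:roots} with \Cref{prop:density}; the equality of closures is exactly the content of \Cref{prop:density}, so only exoticness remains to be shown.

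Let \(\pi:\GL(Q)\to U(\mathcal H)\) be a strongly continuous unitary representation. By \Cref{thm:roots}, \(\pi(g)=\id_{\mathcal H}\) for every \(g\in\E_\mu(R)\).

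Fix \(\xi\in\mathcal H\). The map \(g\mapsto\pi(g)\xi-\xi\) is continuous on \(\GL(Q)\) for the rank topology, and it vanishes on \(\E_\mu(R)\). By \Cref{prop:density}, \(\E_\mu(R)\) is dense in \(\GL(Q)\) for the rank metric, and \(\GL(Q)\) carries the subspace topology from \(Q\). Hence every \(g\in\GL(Q)\) is the limit of a sequence \(g_j\in\E_\mu(R)\), and
\[
 \pi(g)\xi-\xi=\lim_{j\to\infty}\bigl(\pi(g_j)\xi-\xi\bigr)=0 .
\]
Since \(\xi\) is arbitrary, \(\pi(g)=\id_{\mathcal H}\) for all \(g\in\GL(Q)\). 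Thus \(\pi\) is trivial and \(\GL(Q)\) is exotic.

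There is no real obstacle. The only point to check is that the topology used for density in \Cref{prop:density}, where closures are taken in \(Q\), is the same topology for which strong continuity is assumed. This holds because \(\GL(Q)\) is closed in \(Q\) and carries the inherited rank metric. Note also that no assumption on \(\mathcal H\) (such as separability) is needed: the metric space \(\GL(Q)\) is first countable, so sequences suffice.
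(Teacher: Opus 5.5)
Your proposal is correct and follows the paper's proof: \Cref{thm:roots} gives triviality on \(\E_\mu(R)\), and \Cref{prop:density} extends it to \(\GL(Q)\). The paper phrases the last step as ``the kernel is closed and contains a dense subgroup''; your sequence argument proves the same thing. Closedness of \(\GL(Q)\) in \(Q\) is not needed for that step, since the subspace topology alone suffices.
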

\begin{proof}
The density assertion is \Cref{prop:density}.
By \Cref{thm:roots}, every representation of \(\GL(Q)\) is
trivial on \(\E_\mu(R)\). Its kernel is closed and contains this
dense subgroup, so it equals \(\GL(Q)\).
\end{proof}

\section{The additive group}\label{sec:additive}

In the proof of \Cref{thm:roots}, square-zero perturbations allowed us to encode
addition as multiplication in the unit group.
For \((Q,+)\), the map
\(N\mapsto N\otimes X\) is already additive for every matrix
\(X\), so the same amplification argument applies to all
finite-level matrices.

\begin{theorem}\label{thm:additive}
The additive group \((Q,+)\) is exotic.
\end{theorem}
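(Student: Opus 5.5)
We follow the proof of \Cref{thm:roots}, replacing the square-zero multiplicative encoding by the additive map \(N\mapsto N\otimes X\). Let \(\pi:(Q,+)\to U(\mathcal H)\) be a strongly continuous unitary representation. The images \(\iota_k(\Mat_{n_k}(R))\) form a dense additive subgroup of \(Q\), and the kernel of \(\pi\) is closed. It therefore suffices to show \(\pi(\iota_k(X))=\id_{\mathcal H}\) for every \(k\) and every \(X\in\Mat_{n}(R)\), where \(n=n_k\).

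Fix \(\xi\in\mathcal H\) and set
\[
 \beta_\xi(t)=\sup\Set*{\|\pi(q)\xi-\xi\|\given q\in Q,\ \rk_\mu(q)\leq t}.
\]
Strong continuity at \(0\) gives \(\beta_\xi(t)\to0\) as \(t\downarrow0\). For \(\ell>k\), put \(r=n_\ell/n\). The map \(T_r:\Mat_r(\setZ)\to\Mat_{nr}(R)\), \(T_r(N)=N\otimes X\), is additive because integers are central. Hence
\[
 \sigma_r(N)=\pi(\iota_\ell(T_r(N)))
\]
is a unitary representation of the discrete group \((\Mat_r(\setZ),+)\). Evaluating at the identity matrix gives \(T_r(1_r)=1_r\otimes X=\phi_{\ell,k}(X)\). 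By \eqref{eq:canonical-compatibility}, \(\sigma_r(1_r)=\pi(\iota_k(X))\).

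The main point is the rank bound for the generators \(vw^{\mathsf T}\). We have
\[
 T_r(vw^{\mathsf T})=(v\otimes 1_n)\,X\,(w^{\mathsf T}\otimes 1_n),
\]
a product of an \(nr\times n\) matrix, \(X\), and an \(n\times nr\) matrix. By (SM2) its rank is at most \(\rk(X)\leq n\). Hence
\[
 \rk_\mu\bigl(\iota_\ell(T_r(vw^{\mathsf T}))\bigr)\leq \frac{n}{nr}=\frac1r,
\]
uniformly in \(v\in\{0,1\}^r\) and \(w\in\setZ^r\). This is weaker than the \(1/(nr)\) bound in \Cref{thm:roots}, but it still tends to zero as \(\ell\to\infty\), which is all we need.

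By \Cref{cor:integer-matrix-displacement},
\[
 \|\pi(\iota_k(X))\xi-\xi\|=\|\sigma_r(1_r)\xi-\xi\|\leq\sup_{N\in\Mat_r(\setZ)}\|\sigma_r(N)\xi-\xi\|\leq 2\beta_\xi(1/r).
\]
As \(\ell\to\infty\), \(r=n_\ell/n_k\to\infty\) because \(\mu\) is unbounded, so the right-hand side tends to zero. The left-hand side does not depend on \(\ell\), so it vanishes. Since \(\xi\) was arbitrary, \(\pi(\iota_k(X))=\id_{\mathcal H}\), and density completes the proof.

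The only step needing care is the factorization of \(T_r(vw^{\mathsf T})\) through \(\Mat_n(R)\), which gives a rank bound independent of the integer entries of \(v\) and \(w\).
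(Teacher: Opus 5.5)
Your proposal is correct and is essentially the paper's own proof. It uses the same representation \(\sigma_r(N)=\pi(\iota_\ell(N\otimes X))\), the same factorization \((vw^{\mathsf T})\otimes X=(v\otimes 1_n)X(w^{\mathsf T}\otimes 1_n)\) giving the uniform rank bound \(1/r\), and the same appeal to the uniform Kazhdan displacement bound followed by density of \(\bigcup_k\iota_k(\Mat_{n_k}(R))\).
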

\begin{proof}
Let \(\pi:(Q,+)\to U(\mathcal H)\) be a representation.
Fix \(k\geq0\), put \(n=n_k\), and take \(X\in\Mat_n(R)\).
For \(\ell>k\), put \(r=n_\ell/n\) and define
\[
 \sigma_r:(\Mat_r(\setZ),+)\longrightarrow U(\mathcal H),
 \qquad \sigma_r(N)=\pi\bigl(\iota_\ell(N\otimes X)\bigr).
\]
This is a representation of the discrete additive group
\((\Mat_r(\setZ),+)\): for \(N=(n_{pq})\in\Mat_r(\setZ)\),
\(N\otimes X=(n_{pq}X)_{p,q}\) uses only integer scalar multiples.
For \(v,w\in\setZ^r\), the factorization
\[
 (vw^{\mathsf T})\otimes X
   =(v\otimes 1_n)X(w^{\mathsf T}\otimes 1_n)
\]
and \eqref{eq:canonical-rank} give
\[
 \rk_\mu\bigl(\iota_\ell((vw^{\mathsf T})\otimes X)\bigr)
 =\rk_{nr}((vw^{\mathsf T})\otimes X)
 \leq\frac{\rk(X)}{nr}\leq\frac1r.
\]
For a fixed \(\xi\in\mathcal H\) and \(t>0\), put
\[
 \gamma_\xi(t)=\sup\Set*{\|\pi(x)\xi-\xi\|\given x\in Q,\ \rk_\mu(x)\leq t}.
\]
Strong continuity at zero gives \(\gamma_\xi(t)\to0\) as
\(t\downarrow0\), and the preceding rank bound gives
\[
 \sup_{\substack{v\in\{0,1\}^r\\w\in\setZ^r}}
       \|\sigma_r(vw^{\mathsf T})\xi-\xi\|
 \leq\gamma_\xi(1/r).
\]
Moreover,
\[
 \iota_\ell(1_r\otimes X)
 =\iota_\ell(\phi_{\ell,k}(X))=\iota_k(X).
\]
Thus \Cref{cor:integer-matrix-displacement} yields
\[
 \|\pi(\iota_k(X))\xi-\xi\|
 =\|\sigma_r(1_r)\xi-\xi\|
 \leq2\gamma_\xi(1/r)\longrightarrow0.
\]
Here \(\ell\to\infty\), so \(r=n_\ell/n\to\infty\), whereas
the left side is independent of \(\ell\). It is therefore zero
for every \(\xi\), proving \(\pi(\iota_k(X))=\id_{\mathcal H}\).
The union of the images \(\iota_k(\Mat_{n_k}(R))\) is dense in
\(Q\), so the representation is trivial.
\end{proof}

\section{Small subgroups and escape functions}\label{sec:small-subgroups}

For \(\varepsilon>0\) and an identity neighborhood \(U\) in
\(\GL(Q)\), put
\[
 B_\varepsilon=\Set*{g\in\GL(Q)\given d(g,1_Q)<\varepsilon},
 \qquad
 T(U)=\bigcup\Set*{H\leq\GL(Q)\given H\subseteq U}.
\]
The set \(T(U)\) need not itself be a subgroup. Products such as
\(T(U)^m\) denote products of \(m\) elements of this set.

The small-support factorization principle of \cite[Lemma~3.8 and
Proposition~3.9]{SchneiderSolecki} can be implemented here using diagonal
corners.
If \((S,\rk)\) is a rank ring and \(e\in S\) is
an idempotent, then
\[
 \Gamma_S(e):=\Set*{a+1_S-e\given a\in\GL(eSe)}\leq\GL(S),
 \qquad \rk(h-1_S)\leq\rk(e)\quad(h\in\Gamma_S(e)).
\]
Indeed, \((a+1_S-e)(b+1_S-e)=ab+1_S-e\) for \(a,b\in eSe\),
and the inverse of \(a+1_S-e\) is \(a^{-1}+1_S-e\), where
\(a^{-1}\) is taken in the corner ring with identity \(e\).
The rank bound follows from \(h-1_S=a-e=e(a-e)e\);
compare \cite[Lemma~6.1]{SchneiderGeometry}.

\begin{proposition}\label{prop:small-subgroup-width}
For every \(\varepsilon>0\) and every positive integer
\(m>\varepsilon^{-1}\),
\begin{equation}\label{eq:small-subgroup-width}
 \GL_\mu(R)\subseteq T(B_\varepsilon)^m.
\end{equation}
\end{proposition}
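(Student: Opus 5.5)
Fix \(g\in\GL_\mu(R)\), say \(g=\iota_k(X)\) with \(X\in\GL_{n_k}(R)\), and a positive integer \(m>\varepsilon^{-1}\), so \(1/m<\varepsilon\). The plan is to amplify \(g\) to a level \(\ell\) where the diagonal has been cut into \(m\) consecutive corners, each of normalized rank below \(\varepsilon\). We then factor the amplified matrix as a product of \(m\) block-diagonal pieces, each supported on one corner. Each piece lies in a corner subgroup \(\Gamma_Q(e)\) with \(\rk_\mu(e)<\varepsilon\). By the displayed rank bound \(\rk(h-1)\le\rk(e)\) for \(h\in\Gamma_Q(e)\), every such subgroup is contained in \(B_\varepsilon\), hence in \(T(B_\varepsilon)\).

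First choose \(\ell>k\) with \(r=n_\ell/n_k\) large; how large will be fixed by the rounding step below. We have \(g=\iota_\ell(1_r\otimes X)=\iota_\ell(\diag(X,\dots,X))\) with \(r\) diagonal copies. Partition the \(r\) copies into \(m\) consecutive groups of sizes \(s_1,\dots,s_m\) with \(\sum s_p=r\) and each \(s_p\le\lceil r/m\rceil\). Let \(e_p\in\Mat_{n_\ell}(R)\) be the diagonal idempotent projecting onto the \(p\)-th group of coordinate blocks, and let \(h_p\) be the matrix equal to \(1_{s_p}\otimes X\) on that corner and the identity elsewhere. Then the \(h_p\) commute and \(\diag(X,\dots,X)=h_1\cdots h_m\). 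Each \(\iota_\ell(h_p)\) lies in \(\Gamma_Q(\iota_\ell(e_p))\), because \(e_pX'e_p\) is invertible in the corner with inverse \(1_{s_p}\otimes X^{-1}\). The normalized rank satisfies \(\rk_\mu(\iota_\ell(e_p))=s_p/r\), using \(\rk(1_R)=1\) and block additivity.

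The main obstacle is getting \(s_p/r<\varepsilon\) while only knowing \(1/m<\varepsilon\): we have \(s_p/r\le\lceil r/m\rceil/r\le 1/m+1/r\). Since \(1/m<\varepsilon\) strictly, choose \(\ell\) so large that \(1/r<\varepsilon-1/m\). This is possible because \(\mu\) is unbounded. Then every corner has rank strictly below \(\varepsilon\). Hence \(g\in\Gamma_Q(e_1)\cdots\Gamma_Q(e_m)\subseteq T(B_\varepsilon)^m\).

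The remaining verification is the harmless point that \(\Gamma_Q(e)\) is a subgroup contained in \(B_\varepsilon\) when \(\rk_\mu(e)<\varepsilon\). This is exactly the preceding remark applied with \(S=Q\). If \(R\) has nontrivial rank kernel, some \(s_p\) may be zero; empty corners give the trivial factor, which is still allowed.
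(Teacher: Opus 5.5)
Your proof is correct and follows the paper's argument essentially step for step. You amplify to \(1_r\otimes X\) and split the \(r\) diagonal blocks into \(m\) groups of size at most \(\lceil r/m\rceil\). Each block-corner factor then lies in \(\Gamma_Q(e_s)\) with \(\rk_\mu(e_s)\le 1/m+1/r<\varepsilon\). The only difference is that the paper also takes \(r\ge m\) so that all groups are nonempty. You instead allow empty groups and correctly note that they contribute the trivial factor; note, though, that empty groups occur exactly when \(r<m\), not because of any rank kernel.
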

\begin{proof}
Fix \(g=\iota_k(u)\) with \(u\in\GL_{n_{k}}(R)\).
Since \(1/m<\varepsilon\) and \(n_\ell/n_k\to\infty\), choose
\(\ell\geq k\) such that \(r=n_\ell/n_{k}\geq m\) and
\[
 \frac1m+\frac1r<\varepsilon.
\]
Partition \(\{1,\ldots,r\}\) into nonempty sets
\(I_1,\ldots,I_m\) with \(|I_s|\leq\lceil r/m\rceil\).
In \(\Mat_{rn_{k}}(R)\), let \(p_s\) be the diagonal block
idempotent with block \(1_{n_{k}}\) at positions in \(I_s\) and
block \(0_{n_{k}}\) elsewhere. Set
\[
 A=1_r\otimes u,\qquad
 v_s=1_{rn_k}-p_s+p_sAp_s.
\]
The matrix \(A\) commutes with each \(p_s\), and its inverse is
\(1_r\otimes u^{-1}\). Hence
\[
 (p_sAp_s)(p_sA^{-1}p_s)
 =(p_sA^{-1}p_s)(p_sAp_s)=p_s.
\]
Thus \(p_sAp_s\) is a unit in the corner with identity \(p_s\),
and \(v_s\) is a unit in \(\Mat_{rn_k}(R)\).

Put \(e_s=\iota_\ell(p_s)\).
These are idempotents of \(Q\).
Applying \(\iota_\ell\) to \(p_sAp_s\) and \(p_sA^{-1}p_s\) shows that
\(\iota_\ell(p_sAp_s)\) is invertible in \(e_sQe_s\), with inverse
\(\iota_\ell(p_sA^{-1}p_s)\). In particular,
\[
 \iota_\ell(v_s)\in\Gamma_Q(e_s).
\]
This argument does not require \(\iota_\ell\) to be injective.
By block additivity and \eqref{eq:canonical-rank},
\[
 \rk_\mu(e_s)
 =\frac{\rk(p_s)}{rn_{k}}
 =\frac{|I_s|n_{k}}{n_\ell}
 =\frac{|I_s|}{r}
 \leq\frac{\lceil r/m\rceil}{r}
 \leq\frac1m+\frac1r<\varepsilon.
\]
The bound now gives
\(\Gamma_Q(e_s)\subseteq B_\varepsilon\), so
\(\iota_\ell(v_s)\in T(B_\varepsilon)\).
The matrices \(v_s\) act on disjoint groups of blocks; their
product is exactly \(A\). Therefore
\[
 g=\iota_\ell(1_r\otimes u)
   =\prod_{s=1}^m\iota_\ell(v_s)\in T(B_\varepsilon)^m.
\]
The choices of \(m\) and \(\varepsilon\) were independent of
\(k\) and \(u\), proving \eqref{eq:small-subgroup-width}.

\end{proof}

By \Cref{prop:density}, \(\GL_\mu(R)\) is dense in \(\GL(Q)\).
For the same \(\varepsilon\) and \(m\), this gives the first equality in
\begin{equation}\label{eq:ball-width}
 \GL(Q)=\overline{T(B_\varepsilon)^m}=B_\varepsilon^m,
\end{equation}
where the closure is taken in \(\GL(Q)\).
For the second equality, choose \(\eta\) with
\(1/m<\eta<\varepsilon\).
For any \(g\in\GL(Q)\), choose \(h\in\GL_\mu(R)\) with
\(d(g,h)<\varepsilon-\eta\). Applying
\eqref{eq:small-subgroup-width} with radius \(\eta\), write
\(h=h_1\cdots h_m\), where \(h_s\in T(B_\eta)\subseteq B_\eta\).
Bi-invariance and the triangle inequality give
\[
 d((gh^{-1})h_1,1_Q)
 \leq d(gh^{-1},1_Q)+d(h_1,1_Q)<\varepsilon.
\]
Thus \(g=((gh^{-1})h_1)h_2\cdots h_m\in B_\varepsilon^m\),
proving the remaining equality, including when \(m=1\).

This argument does not establish \(\GL(Q)=T(B_\varepsilon)^m\): the
corrected first factor belongs to the ball, but need not belong
to a subgroup contained in that ball.

A topological group \(G\) is \emph{Bourbaki bounded} if for every
identity neighborhood \(U\) there are a finite set \(F\subseteq G\)
and a positive integer \(N\) such that \(G=FU^N\).
Equivalently, \(G\) is bounded with respect to its left uniformity; in our
bi-invariant metric setting, the left and right uniformities coincide.

\begin{corollary}\label{cor:bourbaki}
For every \(\varepsilon>0\), the least positive integer \(N\) such that
\(\GL(Q)=B_\varepsilon^N\) is
\[
 N=\lfloor\varepsilon^{-1}\rfloor+1.
\]
In particular, \(\GL(Q)\) is Bourbaki bounded.
\end{corollary}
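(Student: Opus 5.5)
The plan has two halves: an upper bound taken directly from \eqref{eq:ball-width}, and a lower bound from a rank-counting argument that uses the bi-invariant metric.

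\emph{Upper bound.} Put \(m=\lfloor\varepsilon^{-1}\rfloor+1\). Then \(m>\varepsilon^{-1}\), so \eqref{eq:ball-width} gives \(\GL(Q)=B_\varepsilon^{m}\). Hence the least such \(N\) is at most \(\lfloor\varepsilon^{-1}\rfloor+1\).

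\emph{Lower bound.} Write \(N\) for a positive integer with \(N\leq\lfloor\varepsilon^{-1}\rfloor\), so that \(N\varepsilon\leq1\). We must show \(B_\varepsilon^N\neq\GL(Q)\).

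By bi-invariance and the triangle inequality, every product \(g=h_1\cdots h_N\) with \(h_s\in B_\varepsilon\) satisfies
\[
 d(g,1_Q)\leq\sum_{s=1}^N d(h_s,1_Q)<N\varepsilon\leq1 .
\]
It therefore suffices to exhibit a unit \(g\in\GL(Q)\) with \(d(g,1_Q)=\rk_\mu(g-1_Q)\geq1\).

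The natural candidate is \(g=-1_Q\), for which \(\rk_\mu(g-1_Q)=\rk_\mu(2\cdot1_Q)\). This is unreliable: if \(2=0\) in \(R\), or if \(\rk(2)<1\) (as for \(\setZ/4\setZ\) with the normalized length rank), the distance is less than \(1\).

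Instead I will use a permutation matrix without fixed points. Fix \(k\) with \(n=n_k\geq2\). Let \(P\in\GL_n(R)\) be the cyclic shift, and set \(g=\iota_k(P)\). By \eqref{eq:canonical-rank},
\[
 d(g,1_Q)=\rk_n(P-1_n).
\]
The difficulty is that \(P-1_n\) has rank \(n-1\) over a field, not \(n\). So this choice only gives the value \((n-1)/n\), and a single permutation never reaches \(1\).

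This is the main obstacle. Over a division ring every unit \(g\) has \(\rk(g-1)\leq1\), and the supremum \(1\) may not be attained. The inequality \(d(g,1_Q)<N\varepsilon\) is strict, but \(N\varepsilon\) can equal \(1\) exactly (for example \(\varepsilon=1/N\)), so attainment is genuinely needed.

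I would therefore build an element of distance exactly \(1\) in the completion. Take \(g=\lim_j\iota_{k_j}(P_j)\), where the \(P_j\) are amplified cyclic shifts on finer and finer levels. The defects of the successive \(P_j-1\) are \(1/n_{k_j}\), and these tend to \(0\). The difference between consecutive approximations lives on blocks of small total normalized rank, so the sequence is Cauchy. Its limit is a unit by closedness of \(\GL(Q)\) (\Cref{prop:density}), and continuity of \(\rk_\mu\) gives \(\rk_\mu(g-1_Q)=1\).

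Concretely, I would try an infinite product of commuting corner shifts. The factors are placed on disjoint diagonal corners \(e_1,e_2,\dots\) of ranks \(2^{-j}\), with the \(j\)-th factor chosen so that its defect inside its corner tends to zero. This requires the factor sequence to allow such corners; if that fails, I would use nested refinements instead.

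The lower bound \(\rk\geq\) (corner rank minus defect) on each corner comes from (SM3) and (SM4) via block additivity. Verifying that these lower bounds add up to exactly \(1\) in the limit is the step I expect to require the most care.

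Finally, Bourbaki boundedness follows with \(F=\{1_Q\}\). Every identity neighbourhood contains some \(B_\varepsilon\), and \(\GL(Q)=B_\varepsilon^{N}\) for \(N=\lfloor\varepsilon^{-1}\rfloor+1\).
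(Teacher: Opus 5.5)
Your upper bound is correct, as is the reduction of the lower bound to finding a unit \(g\) with \(d(g,1_Q)=1\). You are also right that attainment is needed when \(\varepsilon=1/N\). The gap is in producing such a \(g\). Your detour into the completion rests on a false premise: distance \(1\) \emph{is} attained at a finite level. Permutation matrices are the wrong test class, since they always fix the all-ones vector.

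What you are missing is a finite-level matrix \(A\) such that both \(A\) and \(A-1_n\) are invertible over \(\setZ\). The matrices \(A_2=\begin{pmatrix}0&-1\\1&1\end{pmatrix}\) and \(A_3=\begin{pmatrix}0&0&1\\1&0&1\\0&1&0\end{pmatrix}\) satisfy \(\det A_j=\det(A_j-1_j)=1\). Write \(n=n_k=2a+3b\) and let \(A\) be block diagonal with \(a\) copies of \(A_2\) and \(b\) copies of \(A_3\). Then \(A\) and \(A-1_n\) are invertible over \(R\) via \(\setZ\to R\), so \(g=\iota_k(A)\) is a unit with \(d(g,1_Q)=\rk_n(A-1_n)=1\) exactly. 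This is the paper's argument, and it needs no limit.

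The construction you commit to (\emph{Concretely\ldots}) cannot reach \(1\). Let the corner factors \(a_j\) be finite-level shifts with positive defects \(\delta_j\). Then your own block-additivity accounting gives
\(\rk_\mu(g-1_Q)=\sum_j\bigl(\rk_\mu(e_j)-\delta_j\bigr)=1-\sum_j\delta_j<1\).
Letting \(\delta_j\to0\) does not change this.

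Your other sketch, a limit of cyclic shifts \(P_j\) at levels \(k_j\), can be completed. However, the estimates it needs are exactly the part you defer, and they are absent from the proposal:
\begin{enumerate}
\item One needs \(\rk(P-1_n)=n-1\) in an \emph{arbitrary} rank ring, not just over a field. The upper bound holds because the columns of \(P-1_n\) sum to zero. The lower bound holds because its submatrix on rows \(2,\dots,n\) and columns \(1,\dots,n-1\) is unitriangular over \(\setZ\).
\item One needs \(1_r\otimes P\) to differ from the size-\(rn\) cyclic shift in only \(r\) columns. This gives \(d(\iota_{k_j}(P_j),\iota_{k_m}(P_m))\leq1/n_{k_j}\) for \(m>j\).
\end{enumerate}
Closedness of \(\GL(Q)\) and continuity of \(\rk_\mu\) would then give a unit \(g\) with \(\rk_\mu(g-1_Q)=1\). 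As it stands, the lower bound is not proved.
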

\begin{proof}
The upper bound follows by taking
\(m=\lfloor\varepsilon^{-1}\rfloor+1\) in \eqref{eq:ball-width}.

For optimality, the integer matrices
\[
 A_2=\begin{pmatrix}0&-1\\1&1\end{pmatrix},\qquad
 A_3=\begin{pmatrix}0&0&1\\1&0&1\\0&1&0\end{pmatrix}
\]
satisfy
\(\det A_j=\det(A_j-1_j)=1\) for \(j=2,3\).
For any \(n=n_k\geq2\), write \(n=2a+3b\) with nonnegative
integers \(a,b\), and form the block diagonal matrix \(A\) with
\(a\) copies of \(A_2\) and \(b\) copies of \(A_3\).
Both \(A\) and \(A-1_n\) are invertible over \(\setZ\), hence
over \(R\) via its canonical coefficient map. Consequently
\(g=\iota_k(A)\) is a unit with \(d(g,1_Q)=1\).
Every product of \(N\) elements of \(B_\varepsilon\) has
distance strictly less than \(N\varepsilon\) from \(1_Q\).
Thus \(\GL(Q)=B_\varepsilon^N\) requires \(N\varepsilon>1\),
which proves the asserted minimum. Every identity neighborhood
contains such a ball, so Bourbaki boundedness follows with
\(F=\{1_Q\}\).
\end{proof}

The stronger factorization through small subgroups is needed for escape
functions, since if \(H\leq \GL(Q)\) is contained in an identity neighborhood
\(U\), then every positive power of each \(h\in H\) remains in \(U\).
Following \cite[Definition~3.1]{SchneiderSolecki}, a \emph{length function}
on a group \(G\) is a function \(f:G\to[0,\infty)\) satisfying the
following conditions for all \(g,h\in G\):
\[
 (1) f(1_G)=0,\qquad (2) f(g^{-1})=f(g),\qquad (3)
 f(gh)\leq f(g)+f(h).
\]
If \(G\) is a topological group, \(f\) is an \emph{escape function}
if some identity neighborhood \(U\) has the following property:
for every \(\delta>0\), there is a positive integer \(N\) such that,
for all \(g\in G\),
\begin{equation}\label{eq:escape-definition}
 g,g^2,\ldots,g^N\in U\quad\Longrightarrow\quad f(g)<\delta.
\end{equation}
Such a \(U\) is called an \emph{escape neighborhood}.
Every escape function is continuous. Indeed, for this \(N\) the
set \(\bigcap_{j=1}^N \Set*{g\in G\given g^j\in U}\) is an identity
neighborhood on which \(f<\delta\).
This proves continuity at the identity; continuity everywhere follows
from \(\lvert f(g)-f(h)\rvert\leq f(h^{-1}g)\).

\begin{corollary}\label{cor:no-escape}
Every escape function on \(\GL(Q)\) is zero.
\end{corollary}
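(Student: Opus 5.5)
Let \(f\) be an escape function on \(\GL(Q)\) with escape neighborhood \(U\). The plan is to show first that \(f\) vanishes on \(T(B_\varepsilon)\) for small \(\varepsilon\), then to use subadditivity and the factorization of \Cref{prop:small-subgroup-width} to make \(f\) small on \(\GL_\mu(R)\), and finally to pass to \(\GL(Q)\) by density and continuity.

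\textbf{Step 1.} Choose \(\varepsilon_0>0\) with \(B_{\varepsilon_0}\subseteq U\). Let \(\varepsilon\le\varepsilon_0\) and \(h\in T(B_\varepsilon)\). Then \(h\) lies in some subgroup \(H\subseteq B_\varepsilon\subseteq U\), so \(h^j\in U\) for every \(j\geq1\). For each \(\delta>0\), the escape property \eqref{eq:escape-definition} with the corresponding \(N\) gives \(f(h)<\delta\). Hence \(f(h)=0\), and \(f\) vanishes on \(T(B_{\varepsilon_0})\).

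\textbf{Step 2.} Fix \(m>\varepsilon_0^{-1}\). By \Cref{prop:small-subgroup-width}, every \(g\in\GL_\mu(R)\) is a product of \(m\) elements of \(T(B_{\varepsilon_0})\). Subadditivity of \(f\) then gives \(f(g)=0\), so \(f\) vanishes on \(\GL_\mu(R)\).

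\textbf{Step 3.} Every escape function is continuous, as noted just before the statement. By \Cref{prop:density}, \(\GL_\mu(R)\) is dense in \(\GL(Q)\). Therefore \(f\equiv0\) on \(\GL(Q)\).

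The only point needing care is Step 1. The argument requires that whole subgroups, and not merely single elements, lie inside \(U\), so that all powers of \(h\) stay in \(U\). This is exactly why the factorization is taken through \(T(B_\varepsilon)\) rather than through the ball \(B_\varepsilon\). The remark after \eqref{eq:ball-width} shows we cannot factor arbitrary elements of \(\GL(Q)\) this way, so density and continuity are used to finish instead.
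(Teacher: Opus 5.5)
Your proposal is correct and follows essentially the same argument as the paper. Both show that \(f\) vanishes on \(T(B_\varepsilon)\) because whole subgroups lie in the escape neighborhood, extend this to \(m\)-fold products by subadditivity, and conclude by continuity and density; the only difference is that you use the density of \(\GL_\mu(R)\) directly, whereas the paper cites the density of \(T(B_\varepsilon)^m\) from \eqref{eq:ball-width}, which is derived from exactly those facts.
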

\begin{proof}
Let \(f\) be an escape function with escape neighborhood \(U\).
Choose \(\varepsilon>0\) with \(B_\varepsilon\subseteq U\),
and a positive integer \(m>\varepsilon^{-1}\).
If \(H\leq\GL(Q)\) lies in \(B_\varepsilon\), all positive powers
of each \(h\in H\) lie in \(U\). By
\eqref{eq:escape-definition}, \(f(h)<\delta\) for every
\(\delta>0\), and hence \(f(h)=0\).
Thus \(f\) vanishes on \(T(B_\varepsilon)\), and subadditivity
makes it vanish on \(T(B_\varepsilon)^m\).
By \eqref{eq:ball-width} this set is dense in \(\GL(Q)\), so
continuity gives \(f=0\).
\end{proof}

The preceding proof is the mechanism of
\cite[Lemma~6.6]{SchneiderGeometry}. It also controls continuous
homomorphisms by pulling back escape functions.
A topological group \(H\) has the \emph{escape property} if each
identity neighborhood \(V\) contains a set \(f^{-1}([0,1))\)
for some escape function \(f\) on \(H\).

\begin{corollary}\label{cor:escape-homomorphisms}
Let \(H\) be a Hausdorff topological group with the escape
property. Every continuous homomorphism \(\psi:\GL(Q)\to H\)
is trivial.
\end{corollary}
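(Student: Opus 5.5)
The plan is to pull back escape functions along \(\psi\) and apply \Cref{cor:no-escape}. Fix an identity neighborhood \(V\) of \(H\). By the escape property, there is an escape function \(f\) on \(H\) with \(f^{-1}([0,1))\subseteq V\); let \(W\) be an escape neighborhood for \(f\). Define \(F=f\circ\psi\) on \(\GL(Q)\). The three length-function axioms transfer directly because \(\psi\) is a homomorphism: \(F(1_Q)=f(1_H)=0\), \(F(g^{-1})=f(\psi(g)^{-1})=F(g)\), and \(F(gh)\leq F(g)+F(h)\).

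Next I would check that \(F\) is an escape function with escape neighborhood \(U=\psi^{-1}(W)\). This \(U\) is an identity neighborhood because \(\psi\) is continuous. Given \(\delta>0\), take the integer \(N\) supplied for \(f\) and \(W\). If \(g,g^2,\ldots,g^N\in U\), then \(\psi(g)^j=\psi(g^j)\in W\) for \(j\leq N\), so \(F(g)=f(\psi(g))<\delta\). Hence \(F\) satisfies \eqref{eq:escape-definition}, and \Cref{cor:no-escape} gives \(F=0\). Therefore \(\psi(g)\in f^{-1}([0,1))\subseteq V\) for every \(g\in\GL(Q)\).

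Since \(V\) was an arbitrary identity neighborhood of \(H\), the image \(\psi(\GL(Q))\) lies in the intersection of all identity neighborhoods. In a Hausdorff group this intersection is \(\{1_H\}\), so \(\psi\) is trivial. I expect no real obstacle here; the only point needing care is that the same \(N\) works for the pulled-back function, and this follows because powers commute with \(\psi\).
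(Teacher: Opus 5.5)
Your proposal is correct and matches the paper's proof: you pull back an escape function along \(\psi\) using the escape neighborhood \(\psi^{-1}(W)\), apply \Cref{cor:no-escape} to get \(f\circ\psi=0\), and use the Hausdorff property to conclude that the image lies in \(\{1_H\}\). You also check the length-function axioms for \(f\circ\psi\) explicitly, a step the paper leaves implicit.
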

\begin{proof}
As in \cite[Lemma~6.7]{SchneiderGeometry}, the pullback of an
escape function \(f\) on \(H\) is an escape function on \(\GL(Q)\):
the inverse image under \(\psi\) of an escape neighborhood
for \(f\) verifies \eqref{eq:escape-definition} for \(f\circ\psi\).
Thus \(f\circ\psi=0\) by \Cref{cor:no-escape}.
For each identity neighborhood \(V\) in \(H\), choose \(f\)
with \(f^{-1}([0,1))\subseteq V\). Then \(\psi(\GL(Q))\subseteq V\).
The intersection of all identity neighborhoods in a Hausdorff
group is \(\{1_H\}\), proving the assertion.
\end{proof}

\section{Extreme amenability and examples}\label{sec:applications}

\subsection{Fixed-point consequences}

A topological group \(G\) is \emph{amenable} if every continuous
action of \(G\) on a nonempty compact Hausdorff space \(X\) admits
an invariant regular Borel probability measure. It is \emph{extremely
amenable} if every such action has a global fixed point, that is,
a point \(x\in X\) such that \(gx=x\) for all \(g\in G\).
Amenability provides an invariant measure; exoticness then forces
its support to consist of fixed points.

\begin{lemma}[{\cite[Remark~5.7]{SchneiderSolecki}}]\label{lem:dynamics}
Every amenable exotic topological group is extremely amenable.
\end{lemma}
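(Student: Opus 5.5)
The plan is to let amenability produce an invariant probability measure, and then use exoticness, through the Koopman representation, to show that the measure sits on fixed points. Let \(G\) be amenable and exotic, and let \(G\) act continuously on a nonempty compact Hausdorff space \(X\). By amenability there is a \(G\)-invariant regular Borel probability measure \(m\) on \(X\).

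Next I would consider the Koopman representation \(\kappa:G\to U(L^2(X,m))\), given by \((\kappa(g)f)(x)=f(g^{-1}x)\). Unitarity follows from invariance of \(m\). The main point to verify is strong continuity. Since \(C(X)\) is dense in \(L^2(X,m)\) for a regular Borel measure, and the operators \(\kappa(g)\) are uniformly bounded, it suffices to check continuity on each \(f\in C(X)\). For such \(f\), joint continuity of the action on compact \(X\) gives \(\sup_x|f(g^{-1}x)-f(x)|\to0\) as \(g\to1\). This is the standard tube-lemma argument, and it yields \(\|\kappa(g)f-f\|_2\to0\). I expect this continuity check to be the only delicate step; everything else is formal.

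Exoticness now forces \(\kappa\) to be trivial, so \(f\circ g^{-1}=f\) \(m\)-almost everywhere for every \(f\in C(X)\) and \(g\in G\). Both sides are continuous, so they agree on \(\operatorname{supp}(m)\), which is nonempty because \(m(X)=1\) and \(m\) is regular. Fix \(x\in\operatorname{supp}(m)\) and \(g\in G\), and suppose \(g^{-1}x\neq x\). By Urysohn's lemma (compact Hausdorff spaces are normal) there is \(f\in C(X)\) with \(f(x)=0\) and \(f(g^{-1}x)=1\). This contradicts \(f\circ g^{-1}=f\) at \(x\). Hence every point of \(\operatorname{supp}(m)\) is a global fixed point, and \(G\) is extremely amenable.
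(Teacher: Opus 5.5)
Your proof is correct and follows the same route the paper indicates just before citing \cite[Remark~5.7]{SchneiderSolecki}: amenability supplies an invariant measure, and exoticness applied to the Koopman representation on \(L^2(X,m)\) forces every point of its support to be a global fixed point. Your tube-lemma proof of strong continuity and your Urysohn argument on \(\operatorname{supp}(m)\) supply exactly the details the paper leaves to that reference.
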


Since \((Q,+)\) is abelian, it is amenable. Its exoticness,
proved in \Cref{thm:additive}, therefore gives the following
consequence of \Cref{lem:dynamics}.

\begin{theorem}\label{thm:extreme-amenability}
The additive group \((Q,+)\) is extremely amenable.
\end{theorem}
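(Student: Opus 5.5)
The proof is a direct application of \Cref{lem:dynamics}. That lemma requires two hypotheses: exoticness and amenability, both for the same topological group \((Q,+)\) with its rank topology.

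Exoticness is exactly \Cref{thm:additive}, so nothing further is needed there.

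For amenability, I will invoke the classical fact that every abelian topological group is amenable. Let \((Q,+)\) act continuously on a nonempty compact Hausdorff space \(X\). The space \(P(X)\) of regular Borel probability measures, with the weak\(^*\) topology, is a nonempty compact convex subset of a locally convex space. The group acts on \(P(X)\) by commuting affine continuous maps. The Markov--Kakutani fixed point theorem then gives a common fixed point, which is an invariant measure. Continuity of the induced action on \(P(X)\) is not needed for Markov--Kakutani; each single map only has to be affine and continuous. That holds because each \(x\mapsto g x\) is a homeomorphism of \(X\), so pushforward is weak\(^*\) continuous.

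With both hypotheses in place, \Cref{lem:dynamics} yields that \((Q,+)\) is extremely amenable. To make the step transparent, I would also recall the mechanism. Take an invariant measure \(\nu\) and consider the Koopman representation on \(L^2(X,\nu)\). It is strongly continuous because the action is continuous and \(X\) is compact. By \Cref{thm:additive} this representation is trivial. Hence \(f\circ g=f\) \(\nu\)-almost everywhere for all \(f\in C(X)\) and \(g\in Q\). Therefore every point of \(\operatorname{supp}\nu\) is fixed, and that support is nonempty.

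There is no real obstacle. The only point needing care is the strong continuity of the Koopman representation in the sketched mechanism, and that is already covered by citing \cite[Remark~5.7]{SchneiderSolecki}.
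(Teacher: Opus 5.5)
Your proof is correct and follows the paper's route exactly: you combine amenability of the abelian group \((Q,+)\) with exoticness from \Cref{thm:additive} and then apply the Schneider--Solecki criterion \Cref{lem:dynamics}. Your Markov--Kakutani argument for amenability and your Koopman-representation sketch of the lemma's mechanism are accurate elaborations of steps that the paper simply cites.
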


For \(\GL(Q)\), amenability follows under an additional hypothesis on the
finite-level matrix groups.
Requiring all these discrete groups
to be amenable is equivalent to \(R\) being \emph{locally finite},
that is, every finite subset of \(R\) lies in a finite subring
with the same identity.

The implication from amenability to local finiteness uses
\cite[Theorem~1.1]{EJ}.
We also use the standard facts that amenability passes to subgroups, quotients, and
directed unions, and that finite groups are amenable \cite{JuschenkoAmenability,PierAmenable}.

\begin{theorem}\label{thm:unit-extreme-amenability}
The following conditions on \(R\) are equivalent, with the matrix
groups equipped with the discrete topology:
\begin{enumerate}
\item \(R\) is locally finite;
\item \(\GL_n(R)\) is amenable for every \(n\geq1\);
\item \(\GL_{n_k}(R)\) is amenable for every \(k\);
\item \(\E_3(R)\) is amenable.
\end{enumerate}
Under these conditions, \(\GL(Q)\), endowed with the rank topology,
is extremely amenable.
\end{theorem}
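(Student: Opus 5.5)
We will prove the equivalences by the cycle (1)\(\Rightarrow\)(2)\(\Rightarrow\)(3)\(\Rightarrow\)(4)\(\Rightarrow\)(1), and then deduce extreme amenability of \(\GL(Q)\) from \Cref{thm:main} and \Cref{lem:dynamics}.

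\emph{(1)\(\Rightarrow\)(2).} Fix \(n\geq1\). Every finite subset \(F\subseteq\GL_n(R)\) involves finitely many entries, namely those of the elements of \(F\) and of their inverses. By local finiteness, these entries lie in a finite unital subring \(R_0\subseteq R\). Then \(F\subseteq\GL_n(R_0)\), which is a finite group. Hence \(\GL_n(R)\) is the directed union of the finite subgroups \(\GL_n(R_0)\), where \(R_0\) ranges over the finite unital subrings. It is therefore amenable.

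\emph{(2)\(\Rightarrow\)(3)} is a specialization.

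\emph{(3)\(\Rightarrow\)(4).} Since \(\mu\) is unbounded, there is \(k\) with \(n_k\geq3\). The upper-left corner embedding \(\E_3(R)\hookrightarrow\GL_{n_k}(R)\), given by \(X\mapsto\diag(X,1_{n_k-3})\), is an injective homomorphism. So \(\E_3(R)\) is a subgroup of an amenable discrete group, and is amenable.

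\emph{(4)\(\Rightarrow\)(1).} This is the step I expect to rest entirely on the cited external input, \cite[Theorem~1.1]{EJ}. That theorem should give that \(\E_n(R)\), \(n\geq3\), has Property~\textup{(T)} for finitely generated rings. Suppose \(\E_3(R)\) is amenable, and take a finite subset of \(R\) together with \(1_R\). Let \(R_0\) be the unital subring it generates; it is finitely generated. Then \(\E_3(R_0)\) is a subgroup of \(\E_3(R)\), so it is amenable, and it has Property~\textup{(T)} by \cite{EJ}. A discrete group that is both amenable and Kazhdan is finite. Indeed, amenable plus (T) forces compactness, since the trivial representation weakly contained in the regular representation must be contained in it, and discrete compact means finite. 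Now \(R_0\) embeds in \(\E_3(R_0)\) via \(a\mapsto 1_3+ae_{12}\), so \(R_0\) is finite. Hence \(R\) is locally finite. The delicate point is matching the exact hypotheses of \cite[Theorem~1.1]{EJ}, namely finitely generated unital associative rings with \(n\geq3\); this is why \(\E_3\) is used rather than \(\GL_3\).

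\emph{Extreme amenability.} Assume the conditions hold. Each \(\iota_k(\GL_{n_k}(R))\) is a quotient of the amenable discrete group \(\GL_{n_k}(R)\), hence amenable as a discrete group. So \(\GL_\mu(R)\), as an increasing union of these, is amenable as a discrete group. A discrete-amenable group is amenable in any group topology: a continuous action on compact \(X\) is in particular an action of the discrete group, and it admits an invariant measure. Thus \(\GL_\mu(R)\) with the rank topology is amenable. By \Cref{prop:density}, \(\GL_\mu(R)\) is dense in \(\GL(Q)\). A topological group containing a dense amenable subgroup is amenable: a measure invariant under the dense subgroup is invariant under \(\GL(Q)\), because for each continuous function \(f\) on \(X\) the map \(g\mapsto\int f(gx)\,d\nu\) is continuous, using joint continuity and compactness. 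Hence \(\GL(Q)\) is amenable. By \Cref{thm:main} it is exotic, and \Cref{lem:dynamics} then gives that \(\GL(Q)\) is extremely amenable.
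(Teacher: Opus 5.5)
Your proposal is correct and follows essentially the same route as the paper. You use the same cycle of implications, with (4)\(\Rightarrow\)(1) via Property~(T) of \(\E_3\) over finitely generated subrings \cite[Theorem~1.1]{EJ} combined with ``amenable plus Kazhdan implies finite'' and the embedding \(s\mapsto 1_3+se_{12}\). The extreme amenability step is also the paper's: amenability of \(\GL_\mu(R)\) as a discrete group, transfer of the invariant measure to \(\GL(Q)\) by density and continuity, and then \Cref{thm:main} together with \Cref{lem:dynamics}.
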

\begin{proof}
If \(R\) is locally finite, any finite subset of \(\GL_n(R)\) is
contained in \(\GL_n(S)\) for some finite unital subring
\(S\subseteq R\): take \(S\) containing the entries of the matrices
and their inverses. Hence \(\GL_n(R)\) is a locally finite group,
so (1) implies (2). Clearly (2) implies (3). Since \((n_k)\) is
unbounded, choose \(k\) with \(n_k>3\). The embedding
\(A\mapsto\diag(A,1_{n_k-3})\) identifies \(\E_3(R)\) with a
subgroup of \(\GL_{n_k}(R)\), proving (3) implies (4).

Assume (4), and let \(S\subseteq R\) be a finitely generated
subring containing \(1_R\). The group \(\E_3(S)\) is amenable as a
subgroup of \(\E_3(R)\), and has Property~\textup{(T)} by
\cite[Theorem~1.1]{EJ}. It is therefore finite by
\cite[Theorem~1.1.6]{BekkaHarpeValette}. The injection
\(s\mapsto1_3+se_{12}\) from \(S\) into \(\E_3(S)\) shows that
\(S\) is finite. This proves (4) implies (1).

Under these conditions, \(\GL_\mu(R)\) is amenable as a discrete
group, being the increasing union of the homomorphic images
\(\iota_k(\GL_{n_k}(R))\). Thus every continuous action of
\(\GL(Q)\) on a nonempty compact Hausdorff space admits a
\(\GL_\mu(R)\)-invariant regular Borel probability measure.
Density of \(\GL_\mu(R)\), proved in \Cref{prop:density}, and
continuity of the action make this measure \(\GL(Q)\)-invariant:
for each continuous function on the compact space, its integral
against the translated measure depends continuously on the group
element. Now apply \Cref{thm:main,lem:dynamics}.
\end{proof}

Algebraic extensions of finite fields and matrix rings over locally
finite rings are locally finite.
Thus \Cref{thm:unit-extreme-amenability} applies to infinite rings as well.
For a finitely generated unital ring, local finiteness is equivalent
to finiteness.

\subsection{Examples of coefficient rings}\label{sec:coefficient-examples}

The following examples illustrate the conclusions for non-regular
completions and for completions with nontrivial central idempotents.
First take \(R=\setZ/4\setZ\), equipped with the Sylvester matrix
rank function
\[
 \rk_{\mathrm{len}}(A)=\frac12\ell_R(\operatorname{im} A)
 \qquad(A\in\Mat_{r\times s}(R)),
\]
where \(A:R^{s\times1}\to R^{r\times1}\) acts on column vectors
and \(\ell_R\) denotes composition length; see
\cite[Example~5.6]{JiangBratteli}.
For this rank, \(Q=\overline{R_\mu}\) is neither von Neumann
regular nor simple by \cite[Remark~5.7]{JiangBratteli}.
Moreover, \(\rk_\mu(2\cdot1_Q)=1/2\), so \(Q\) has
characteristic \(4\) and no unital field subring.
Nevertheless, \Cref{thm:main,cor:bourbaki,cor:no-escape} show that \(\GL(Q)\)
is exotic, Bourbaki bounded, and admits no nonzero escape function.
Since \(R\) is finite, \Cref{thm:unit-extreme-amenability} also
gives extreme amenability.

Non-regular completions also occur in characteristic zero.
Take \(R=\setZ\) with \(\rk(A)=\operatorname{rank}_{\mathbb Q}(A)\).
As in \cite[Example~5.8]{JiangBratteli}, the fact that
\(1_n-2Y\) has odd determinant for every integer matrix \(Y\)
gives, by passage to the completion,
\[
 \rk_\mu(1_Q-2x)=1\qquad(x\in Q).
\]
Multiplication by \(2\) preserves rational matrix rank, so
\(\rk_\mu(2z)=\rk_\mu(z)\) for all \(z\in Q\).
Consequently, \(\rk_\mu(2\cdot1_Q-4x)=1\) for every \(x\in Q\).
Thus \(2\cdot1_Q\) has no inner inverse, and \(Q\) is not
von Neumann regular.
As in the preceding example, \Cref{thm:main,cor:bourbaki,cor:no-escape} imply
that \(\GL(Q)\) is exotic, Bourbaki bounded, and admits no
nonzero escape function.
Here \(R\) is not locally finite, so the sufficient criterion
in \Cref{thm:unit-extreme-amenability} does not apply;
our arguments do not determine whether \(\GL(Q)\) is extremely amenable.

To obtain completions with nontrivial central idempotents, consider
a product of rank rings \((R_1,\rk^{(1)})\) and \((R_2,\rk^{(2)})\).
For \(0<t<1\), equip \(R=R_1\times R_2\) with the Sylvester matrix rank
\[
 \rk(A_1,A_2)=t\rk^{(1)}(A_1)+(1-t)\rk^{(2)}(A_2).
\]
Let \(Q_i\) be the completion associated with \((R_i,\rk^{(i)})\)
and the same factor sequence \(\mu\).
The matrix systems split as products, and a sequence is Cauchy
for the weighted rank metric precisely when both components are
Cauchy. Hence the completion \(Q\) is \(Q_1\times Q_2\), with
the corresponding weighted rank; compare
\cite[Example~5.12]{JiangBratteli}.
The central idempotent \((1_{Q_1},0)\) has rank \(t\), so this
completion is reducible. Its unit group still satisfies
\Cref{thm:main,cor:bourbaki,cor:no-escape}.

If \(R_1\) and \(R_2\) are finite fields then
\(\GL(Q)=\GL(Q_1)\times\GL(Q_2)\) is extremely amenable by
\Cref{thm:unit-extreme-amenability}, while its quotient by the
centre is not topologically simple.
Indeed, each \(\GL(Q_i)\) is nonabelian: for \(n=n_k\geq3\),
the commutator of the images of \(1_n+e_{12}\) and \(1_n+e_{23}\)
is the image of \(1_n+e_{13}\), whose distance from the identity
is \(1/n>0\).
The quotient of \(\GL(Q)\) by its centre is therefore the
product of two nontrivial Hausdorff groups, each defining a
proper closed normal subgroup.

\subsection{Weighted Bratteli completions}\label{sec:bratteli}

The uniqueness theorem of \cite{JiangBratteli} transfers the
results for factor-sequence completions to weighted Bratteli
completions under the hypotheses below. We use the notation of
that paper.
For a unital Bratteli diagram \(B\), let \(V_n\) be its finite
vertex set at level \(n\), let \(p_n(v)\) be the block size at
\(v\), and put
\(A_n(B,R)=\prod_{v\in V_n}\Mat_{p_n(v)}(R)\).
A harmonic function \(\alpha\) assigns compatible probability
weights \(\alpha_n(v)\) to the blocks, defining the rank
\[
 \rk_\alpha((X_v)_{v\in V_n})
 =\sum_{v\in V_n}\frac{\alpha_n(v)}{p_n(v)}\rk(X_v).
\]
The same formula applies to rectangular matrices using their
rectangular blocks. Let \(\overline A_\alpha(B,R)\) denote the
separated rank completion of the algebraic direct limit.

Suppose that \(\alpha\) is extreme and, for every positive integer
\(M\),
\[
 \lim_{n\to\infty}
 \sum_{\substack{v\in V_n\\p_n(v)\leq M}}\alpha_n(v)=0.
\]
By \cite[Theorem~1.1]{JiangBratteli}, this completion is isomorphic,
with its specified rank, to the matrix completion in
\Cref{sec:factor-sequences} for the dyadic sequence \((2^k)_{k\geq1}\).
The isomorphism restricts to isometric isomorphisms of the additive
and unit groups. Hence
\Cref{thm:main,thm:additive,thm:extreme-amenability,thm:unit-extreme-amenability}
apply: both groups are exotic, and the additive group is extremely
amenable. By \Cref{cor:bourbaki,cor:no-escape,cor:escape-homomorphisms},
the unit group is Bourbaki bounded, has exact open-ball
width \(\lfloor\varepsilon^{-1}\rfloor+1\) for every \(\varepsilon>0\),
and admits no nonzero escape function or nontrivial continuous
homomorphism to a Hausdorff group with the escape property.
If \(R\) is locally finite, the unit group of
\(\overline A_\alpha(B,R)\) is extremely amenable as well.

\end{document}